\documentclass[final]{siamltex}


\usepackage{graphicx}  

\usepackage{amssymb, amsmath}

\usepackage{color}

\usepackage[numbers, square, comma, sort&compress]{natbib}

\graphicspath{{./figures/}}

\usepackage{hyperref}  



\newcommand{\Par}[1]{\left(#1\right)}  

\newcommand{\pd}[2]{\frac{\partial #1}{\partial #2}}            
\newcommand{\pdn}[3]{{\frac{\partial^{#1}#2}{\partial#3^{#1}}}} 


\newcommand{\dt}{\Delta t}
\newcommand{\dx}{\Delta x}
\newcommand{\dy}{\Delta y}

\newcommand{\BigOh}{\mathcal{O}}



\newcommand{\R}{\mathbb{R}}             
\newcommand{\Z}{\mathbb{Z}}             

\newcommand{\h}{\bar{H}}                
\renewcommand{\u}{\bar{u}}              
\newcommand{\eps}{\varepsilon}          

\newcommand{\E}{\mathcal{E}}

\renewcommand\L{\mathcal{L}}            

\newtheorem{rmk}{Remark}
\newtheorem{thm}{Theorem}


\title{The Picard integral formulation of weighted essentially non-oscillatory schemes}

\author{David C. Seal\footnotemark[2]\ \footnotemark[4]
     \and Yaman G\"{u}\c{c}l\"{u}\footnotemark[2]\ \footnotemark[4]
     \and Andrew J. Christlieb \footnotemark[2]\ \footnotemark[3]\
     \footnotemark[4] }

\begin{document}

\maketitle

\begin{abstract}
High-order temporal discretizations for hyperbolic conservation laws have historically been formulated as either a method of lines (MOL) or a Lax-Wendroff method.  In the MOL viewpoint, the partial differential equation is treated as a large system of ordinary differential equations (ODEs), where an ODE tailored time-integrator is applied.  In contrast, Lax-Wendroff discretizations immediately convert Taylor series in time to discrete spatial derivatives.  In this work, we propose the Picard integral formulation (PIF), which is based on the method of modified fluxes, and is used to derive new Taylor and Runge-Kutta (RK) methods.  In particular, we construct a new class of conservative finite difference methods by applying WENO reconstructions to the so-called ``time-averaged'' fluxes.  Our schemes are automatically conservative under any modification of the fluxes,
which is attributed to the fact that classical WENO reconstructions conserve mass when coupled with forward Euler time steps. The proposed Lax-Wendroff discretization is constructed by taking Taylor series of the flux function as opposed to Taylor series of the conserved variables.  The RK discretization differs from classical MOL formulations because we apply WENO reconstructions to time-averaged fluxes rather than taking linear combinations of spatial derivatives of the flux.  In both cases, we only need one projection onto the characteristic variables per time step.  The PIF is generic, and lends itself to a multitude of options for further investigation.  At present, we present two canonical examples: one based on Taylor, and the other based on the classical RK method.  Stability analyses are presented for each method.  The proposed schemes are applied to hyperbolic conservation laws in one- and two-dimensions and the results are in good agreement with current state of the art methods.  

\end{abstract}

\renewcommand{\thefootnote}{\fnsymbol{footnote}}
\noindent\footnotemark[2]{
Department of Mathematics, Michigan State University, \\
\, \, East Lansing, Michigan, 48824, USA.} \\
\noindent\footnotemark[3]{
Department of Electrical and Computer Engineering, Michigan State University, \\
\, \, East Lansing, Michigan, 48824, USA.} \\
\noindent\footnotemark[4]{ This work was supported in part by AFOSR grants
FA9550-11-1-0281, FA9550-12-1-0343, and FA9550-12-1-0455, NSF grant
DMS-1115709, and MSU
Foundation grant SPG-RG100059.
}
\renewcommand{\thefootnote}{\arabic{footnote}}

\begin{keywords}
finite difference methods, weighted essentially non-oscillatory, Lax-Wendroff, 
hyperbolic conservation laws
\end{keywords}

\section{Introduction}
\label{sec:Introduction}

We begin our discussion with a 1D system of conservation laws defined by
\begin{equation}
\label{eqn:1D_system.PDE}
  q_{,t} + f(q)_{\!,\,x} = 0,
\end{equation}
where $q(t,x):\mathbb{R}^+\times \mathbb{R} \to \mathbb{R}^m$ is the unknown 
vector of $m$ conserved quantities and $f:\mathbb{R}^m \to \mathbb{R}^m$ is 
a prescribed flux function.  The starting point for this body of work is to 
perform formal integration of \eqref{eqn:1D_system.PDE} over a single interval 
$t \in [t^n, t^{n+1}]$, which defines the 
\emph{Picard integral formulation} (PIF)
of the 1D conservation law as
\begin{subequations}
\label{eqn:picard1d}
\begin{equation}
\label{eqn:picard1d-a}
    q(t^{n+1},x)\ =\ q(t^n,x)\ - \dt F^n(x)_{,\, x},
\end{equation}
where the \emph{time-averaged flux} is defined as
\begin{equation}
\label{eqn:picard1d-b}
    F^n(x) := \frac{1}{\dt} \int_{t^n}^{t^{n+1}} f( q(t, x ) )\, dt.
\end{equation}
\end{subequations}
We use the term Picard to emphasize that we 
propose solving, at least in spirit, an integral, as opposed to a differential equation in time.  
Our goal is to construct explicit time-stepping schemes that do not require an iterative
procedure.
The purpose of defining \eqref{eqn:picard1d} is that {\bf a new class of conservative finite difference methods} 
can be constructed by following the two step process:
\begin{enumerate}
    \item Approximate the time-averaged fluxes in \eqref{eqn:picard1d-b} with
        any temporal discretization.
        We offer the third-order Taylor and the classical fourth-order Runge-Kutta method as two canonical examples.
    \item Insert the result into the non-linear finite difference WENO 
	reconstruction procedure, which provides a conservative high-order approximation to 
	the spatial derivative in \eqref{eqn:picard1d-a}.  We present results that are fifth-order in space.
\end{enumerate}
The combination of these two steps will be called the 
Picard integral formulation of WENO (PIF-WENO), and we remark that 
the methods presented here can be extended to arbitrary order in space and time.
Some advantages that this formulation provide include:
\begin{itemize}
\item Methods that are constructed from this framework are automatically mass
conservative, independent upon how the time averaged fluxes are defined.
\item A variety of time
stepping methods including Runge-Kutta (RK), Taylor, multistep or even
multiderivative methods \cite{HaWa73} can be constructed from this framework.
Currently, we present two methods, one based on a Taylor formulation, and 
one based on an RK formulation.
\item The Picard integral formulation has the capacity to reduce computational 
complexity, including 
i)  smaller stencils, and 
ii) a reduction in the number of characteristic variable projections.
Both methods presented in this work only require a single projection onto
characteristic variables per time step.
\end{itemize}
%

The outline of this paper is as follows.  In \S\ref{sec:background} we present
background material and in \S\ref{sec:PIF1d}, we define two 1D formulations of
PIF-WENO, including a third-order Taylor and a fourth-order Runge-Kutta
discretization. 
We continue in \S\ref{sec:stability} with an analysis of the linear stability
properties of the proposed methods, and in \S\ref{sec:modified-flux-2d} we
extend the results to 2D.  Finally, numerical results and conclusions are
presented in \S\ref{sec:numerical-results} and \S\ref{sec:conclusions}.

\section{Background}
\label{sec:background}

Temporal discretizations for hyperbolic conservation laws are oftentimes
presented as falling into one of two categories: a method of lines (MOL) or a
Taylor/Lax-Wendroff method.  In a MOL formulation, one first discretizes in
space, and then applies an appropriate time integrator to the problem.
Popular high-order methods for hyperbolic conservation laws include finite
difference and finite volume weighted essentially non-oscillatory (WENO)
methods, discontinuous- and Petrov-Galerkin as well as piecewise parabolic
methods (PPMs).  The spatial discretizations defines a large system of ordinary differential
equations (ODEs), that is then integrated in time by a suitable ODE solver,
which is usually an explicit Runge-Kutta method.  MOL methods treat spatial
and temporal discretizations as separate operations, and therefore miss out on
coupling opportunities.  In this work, one of the two examples we present 
includes a new MOL approach that can be derived from the Picard integral
formulation.  Additionally, we derive Lax-Wendroff type methods from the
vantage the Picard integral formulation provides.

The basic procedure in a Lax-Wendroff (Taylor) method 
is to first discretize in time, and then immediately convert the
temporal derivatives to discrete spatial derivatives via the Cauchy-Kowalewski
procedure.  In doing so, space and time is coupled through the PDE, as opposed
to through relying on a system of ODEs.  The original
Lax-Wendroff method \cite{LxW60} is likely the first numerical method
based on this procedure, where second-order accuracy 
was obtained by including the evaluation of flux Jacobians.  
The coupling of arbitrarily high order Lax-Wendroff discretizations to finite volume 
methods was investigated in the 1980's with the original ENO methods of
Harten \cite{HaEnOsCh87}, and starting in the early 2000's, 
the so-called Arbitrary DERivativeâ (ADER) methods 
\cite{TitarevToro02-ader,ToroTitarev02,ToroTitarev05:JSP,TitarevToro05:JCP:systems,DuKa07,ToTi06,NoFi12,BaDiMeDuHuXu13}
have grown tremendously in popularity.
Recent work also includes coupling the Lax-Wendroff discretization to high-order
finite difference WENO 
\cite{QiuShu03,LiuChengShu09,LuQiu11,JiangShuZhang13} 
and discontinuous Galerkin 
\cite{QiuDumbserShu05,DuMu06,TaDuBaDiMu07,Qiu07:numcomp,GaDuHiMuCl11}
methods.  All high-order Lax-Wendroff discretizations rely on the evaluation
of Jacobians, Hessians and higher derivatives that inevitably show up for all
high-order single-step methods.  The finite difference WENO methods that fall
into this category typically compute the first derivative with a single
non-linear WENO reconstruction, and then use (centered) finite differences to
compute higher derivatives.  The finite volume flavored methods including the
discontinuous Galerkin and ADER methods \cite{DuBaDiToMuDi08,GaDuHiMuCl11}
rely on high-order solutions to generalized Riemann problems \cite{ToTi06}.
The common thread in these methods is that the time dependence is immediately
removed in the process of converting temporal to spatial derivatives.  When
compared with Runge-Kutta methods, there are no degrees of freedom to work
with because each coefficient in a Taylor series expansion is unique.

The purpose of this work is to define a new class of non-linear methods for 
hyperbolic conservation laws based on the Picard integral formulation, to
which many existing methods can be derived.  For example,
many finite volume formulations including the original Godunov scheme,
``high-order'' variations of the scheme \cite{VanLeer79,Ha84,Ro87,HaEnOsCh87,BeCoTr89,Me90},
Harten's original ENO method \cite{HaEnOsCh87}, and 
modern ADER methods \cite{TitarevToro05:JCP:systems,DuKa07,NoFi12,BaDiMeDuHuXu13}
work with time-averaged fluxes approximated through Taylor series.  In these
formulations, one takes the further step of integrating \eqref{eqn:picard1d-a}
over a control element and then applying the divergence theorem.  This defines
an evolution equation for the unknown cell averages, which is based on
integrating \eqref{eqn:picard1d-b} over the boundaries of each control
element.  The differences in the methods primarily come from the choice in
discretization of the time-averaged fluxes, as well as how each Riemann
problem is solved.  For example, in 1959 Godunov discretized the time-average
fluxes with constants; his insight was to leverage exact Riemann solvers for
piecewise constant functions.  In 1979 Van Leer \cite{VanLeer79} introduced
piecewise linear approximations to the solution, and therefore was able to
define a second-order version of Godunov's method.  A decade later, Harten,
Engquist, Osher and Chakravarthy \cite{HaEnOsCh87} defined an essentially
non-oscillatory method that used piecewise polynomial reconstructions of the
solution from cell averages, and therefore were able to extend Godunov's
method to arbitrary order.  In their seminal paper, time was discretized via
the Cauchy-Kowalewski/Lax-Wendroff procedure, which resulted in a
single-stage, single-step method \cite{HaEnOsCh87}.  A further decade later,
the now classical ADER formulation was defined \cite{TitarevToro02-ader},
where additional Arbitrary DERivative Riemann problems were introduced in
order to define an arbitrary order, single-step finite volume method.
%
Our focus is not on finite volume methods, nor their discontinuous Galerkin
cousins, but on conservative finite difference methods.

The vantage the Picard integral formulation provides allows one to 
produce a variety of conservative finite difference schemes.
To the authors' knowledge, the first conservative finite difference scheme 
that can be cast under this light is the 1D semi-Lagrangian 
solver created by J-M Qiu and Shu in 2011 \cite{QiuShu2011}.
There, the authors solve $q_{,t} + (a(t,x) q)_{,x} = 0$ with semi-Lagrangian
technology, which converts temporal integrals into spatial integrals.  Results
are extended to 2D incompressible flows by applying operator splitting techniques.  
In order to maintain mass conservation, they make use of a time-averaged version
of the usual discrete implicit flux function, which is not explicitly stated
in their work.  We elaborate and extend this idea in \S\ref{subsec:TI-1D}.

Our contribution is the development of time-averaged fluxes for conservative
finite difference methods, which include non-linear scalar, and
systems of equations for single and multidimensional systems of hyperbolic
conservation laws.  Furthermore, we introduce an entirely new interpretation
of time-averaged fluxes that can be applied to a large class of Runge-Kutta
methods.  Both the Taylor and Runge-Kutta interpretation only 
require a single projection onto characterstic variables per time step.
We make use of time-averaged discrete implicit flux
functions in order to retain mass conservation.  We refer the reader to the
references for details on the essentially non-oscillatory method
\cite{HaEnOsCh87,ShuOsher87,ShuOsher89}, as well as the weighted essentially
non-oscillatory method \cite{LiuOsherChan94,JiangShu96,Shu97,Shu09}.  
The Picard integral formulation is agnostic towards the choice of time
discretization, and therefore we begin by presenting the
conservative finite difference reconstruction procedure based upon
time-averaged fluxes.  Two examples of time discretizations are presented in
\S\S\ref{subsec:1D-Taylor}-\ref{subsec:1D-RK}, a stability analysis is
presented in \S\ref{sec:stability}, and the extension to 2D is presented 
in \S\ref{sec:modified-flux-2d}.

\section{The 1D Picard integral formulation}
\label{sec:PIF1d}

\subsection{1D conservative finite differences with time-averaged fluxes}
\label{subsec:TI-1D}

Consider a uniform grid of $m_x$ equidistant 
points in $\Omega = [a,b]$:
\begin{equation}
  x_i = a + \Par{i-\frac{1}{2}}\Delta x,
  \qquad \Delta x = \frac{b-a}{m_x},
  \qquad i\in\{1,\dots,m_x\}.
\end{equation}
In a finite difference method, one enforces some approximation to 
\eqref{eqn:1D_system.PDE} to hold on a finite set of points $x_i$, and seeks 
point value approximations $q_i^n \approx q(t^{n},x_i)$ to hold at discrete 
time levels $t^n$.

The classical conservative finite difference WENO
method implicitly defines a function $h( q(t, x) )$, whose sliding
average\footnote{The term `sliding average' was borrowed from the high-order
finite volume methods of Harten and collaborators \cite{HaEnOsCh87} before being imported
to conservative finite difference methods \cite{ShuOsher87,ShuOsher89}.} agrees
with $f(q(t,x))$ via
\begin{equation}\label{eqn:1D_system.TI-h-small}
  \bar{h}( q(t,x) ) := \frac{1}{\Delta x} \int_{x-\Delta x/2}^{x+\Delta x/2} h( q(t,s) )\, ds\ = f(q(t,x)).
\end{equation}
We consider a time-averaged version of this implicit function, defined by
\begin{equation}\label{eqn:h-avg}
  H^n( x ) := \frac{1}{\dt} \int_{t^n}^{t^{n+1}} {h}( q(t,x) )\, dt.
\end{equation}
After integrating \eqref{eqn:1D_system.TI-h-small} in time, we see
\begin{equation}\label{eqn:1D_system.TI-H}
  \h^n( x ) := \frac{1}{\dt} \int_{t^n}^{t^{n+1}} \bar{h}( q(t,x) )\, dt
  = \frac{1}{\Delta x} \int_{x-\Delta x/2}^{x+\Delta x/2} H^n( s )\, ds\ = F^n(x).
\end{equation}
We remark that \eqref{eqn:1D_system.TI-H} is similar to the usual averaging property found in 
finite difference methods, but {\bf the primary departure is that we have chosen to integrate
both the flux and the primitive of the flux function in time,}  
which differs from what is typically 
done in a finite difference WENO method, 
where the reconstruction procedure is carried out at discrete ``frozen in
time'' levels.   
We now point out two important properties concerning the implicitly defined 
function, which are in common with any classical MOL formulation:
\begin{itemize}
\item High-order reconstruction algorithms can produce high-order 
interface values $H^n( x_{i-1/2} )$ by identifying cell averages as known point
values: $\h^n (x_i) = F^n_i$, where $F^n_i := F^n( x_i )$, and therefore
$H^n$ need never be computed.
\item High-order derivatives of $F^n$ are computed by evaluating 
$H^n$ at interface points and applying the Fundamental Theorem of Calculus \cite{Shu97,Shu09}
\begin{equation}
\label{eqn:dF}
 \frac{d F^n}{dx}(x_i) = \frac{1}{\dx} \Bigl[ H^n(x_{i+1/2}) - H^n(x_{i-1/2}) \Bigr].
\end{equation}
\end{itemize}
Once \eqref{eqn:dF} has been defined, the complete finite difference
PIF-WENO method is given by inserting
the result into \eqref{eqn:picard1d}, which results in a single-step update 
\begin{equation}\label{eqn:1D_system.TI-cons}
  q_i^{n+1} = q_i^n -
    \frac{\dt}{\dx} \Par{ \hat{F}^n_{i+\frac{1}{2}} - \hat{F}^n_{i-\frac{1}{2}}},
\end{equation}
where $\hat{F}^n_{i-1/2} := H^n( x_{i-1/2} )$.
Before providing the description of how the $H^n_{i-1/2}$ are constructed, we 
point out a single remark followed by an important theorem.

\begin{rmk} \label{rmk:reduction-to-euler}
In the case where the time-averaged fluxes are approximated as constants through
$F^n( x ) \approx f( q(t^n, x ) )$, the PIF-WENO formulation reduces to a single
Euler step of a classical WENO method.
\end{rmk}

\begin{thm}
\label{thm:1dcons}
No matter how the quantities $F^n_{i}$ are approximated,
on an infinite or periodic domain,
the update defined by \eqref{eqn:1D_system.TI-cons}
satisfies the conservation property
\begin{equation}
  \sum_i q_i^{n+1} = \sum_i q_i^n.
\end{equation}
\end{thm}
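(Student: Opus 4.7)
The plan is to recognize that the update \eqref{eqn:1D_system.TI-cons} is written in flux-difference form, so conservation follows from a telescoping sum argument that is standard for any finite difference method in conservative form. The crucial point is that the theorem's ``no matter how'' clause is exactly what makes this clean: the argument only ever touches the \emph{shape} of the right-hand side, namely $\hat{F}^n_{i+1/2} - \hat{F}^n_{i-1/2}$, and never the particular recipe used to define $\hat{F}^n_{i-1/2}$.

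Concretely, I would first sum \eqref{eqn:1D_system.TI-cons} over all valid indices $i$, producing
\begin{equation*}
  \sum_i q_i^{n+1} \;=\; \sum_i q_i^n \;-\; \frac{\dt}{\dx}\sum_i \Par{\hat{F}^n_{i+\frac{1}{2}} - \hat{F}^n_{i-\frac{1}{2}}}.
\end{equation*}
Next I would observe that the final sum telescopes: after re-indexing, every interface value $\hat{F}^n_{i+1/2}$ appears once with a plus sign and once with a minus sign. On a periodic domain the leftover boundary terms coincide and cancel identically; on an infinite domain the partial sums cancel in the limit, provided the solution (and hence $\hat{F}^n_{i-1/2}$) has compact support or decays sufficiently at infinity, which is the standard tacit assumption behind the statement $\sum_i q_i^n$. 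In either case the double sum on the right vanishes, yielding $\sum_i q_i^{n+1} = \sum_i q_i^n$.

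There is essentially no obstacle here: the only subtlety worth flagging is that the assertion ``no matter how the $F^n_i$ are approximated'' is really a statement about the \emph{form} of the update rather than about the reconstruction. Since the PIF-WENO procedure produces an $\hat{F}^n_{i-1/2}$ attached to a single interface and then plugs it into a flux-difference skeleton, any modeling choice—Taylor, Runge--Kutta, or otherwise—inherits conservation automatically. This is the reason the authors emphasized in the introduction that the scheme is ``automatically conservative under any modification of the fluxes,'' and the proof is precisely the one-line telescoping calculation above.
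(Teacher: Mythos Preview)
Your proposal is correct and is exactly the paper's approach: the paper's entire proof is ``Sum equation \eqref{eqn:1D_system.TI-cons} over all $i$,'' and you have simply written out the telescoping step explicitly.
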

\begin{proof}
Sum equation \eqref{eqn:1D_system.TI-cons} over all $i$.
\end{proof}

This theorem is a consequence of the deliberate choice of the order of operations 
used to carry out \eqref{eqn:picard1d}.  Its
significance means that the time-averaged fluxes can be discretized in any
manner without violating mass conservation.
Under the assumption that the $F^n_i$ have been approximated 
at each grid point, 
we are now ready to describe the finite difference WENO procedure for systems.
A Taylor discretization will be presented in \S \ref{subsec:1D-Taylor}, and a
Runge-Kutta discretization will be presented in \S \ref{subsec:1D-RK}.

\subsection{The finite difference WENO method with time-averaged fluxes}
\label{subsec:fd-weno-ta}

\smallskip

\begin{enumerate}

%
%
\item
Compute average values of $q$ at the half grid points
\begin{equation} \label{eqn:roe-avg}
   q^*_{i-1/2} = \frac{1}{2}\left( q_i + q_{i-1} \right).
\end{equation}
Roe averages \cite{Roe81} that satisfy
$f(q_i) - f( q_{i-1} ) = f'( q^*_{i-1/2} ) \left( q_i - q_{i-1} \right)$
may be used in place of \eqref{eqn:roe-avg}.
We use the simple arithmetic average.

\item
Compute the left and right eigenvalue decomposition
of $f'(q) = R \Lambda R^{-1}$ at the half-grid points
\begin{equation*}
   R_{i-1/2} = R( q^*_{i-1/2} ), \quad
   R^{-1}_{i-1/2} = R^{-1}( q^*_{i-1/2} ).
\end{equation*}
Compute the fastest (scalar) local wave speed
\begin{equation*}
    \alpha^*_{i-1/2} := \max\left\{ 
        \left|\, \min \left\{ \Lambda(q_{i-1}),\, \Lambda(q^*_{i-1/2}) \right\} \right|,
        \left|\, \max \left\{ \Lambda(q^*_{i-1/2} ),\, \Lambda(q_i) \right\} \right| \right\}.
\end{equation*}
Our definition of $\alpha^*_{i-1/2}$ is nominally different 
than other definitions that have been used for local wave speeds in 
finite difference WENO methods \cite{BaDiShu00,VuSo02}.
For stability, we follow the common practice of
increasing this speed by exactly 10\% and define
$\alpha_{i-1/2} := 1.1 \cdot \alpha^*_{i-1/2}$, which helps to guarantee that
the approximate wave speeds encompass the exact solution to a Riemann problem.

\smallskip

\item
For each $i$, determine the weighted ENO stencil  $\{i + r\}$ surrounding
$x_{i-1/2}$.
In fifth-order WENO, the full stencil is 
given by $r \in \{-3, -2, -1, 0,1,2 \}$.  Project each $q_{i+r}$ and each
\emph{time-averaged} flux value $F_{i+r}$ onto the characteristic variables 
using the linear mapping $R^{-1}_{i-1/2}$
\begin{subequations}
\label{eqn:ta-projections}
\begin{align}
   w_{i + r} = R^{-1}_{i-1/2} \cdot q_{i + r}, \\
   z_{i + r} = R^{-1}_{i-1/2} \cdot F_{i + r}.
\end{align}
\end{subequations}
Apply a (local) Lax-Friedrichs flux splitting to \eqref{eqn:ta-projections}
\begin{align}
   z^{\pm}_{i+r} = \frac{1}{2}\left( z_{i+r} \pm \alpha_{i-1/2} \,  w_{i+r} \right).
\end{align}
As an alternative, one could use a global wave speed.  

\smallskip

\item
Perform a WENO reconstruction on each of the characteristic variables
separately.
Use the stencil that uses an extra point on the upwind direction for 
defining $z^\pm$:
\begin{align*}
 \hat{z}^+_{i-1/2} &= 
 WENO5^+ \left[ z^+_{i-3}, z^+_{i-2}, z^+_{i-1}, z^+_{i}, z^+_{i+1} \right], \\
 \hat{z}^-_{i-1/2} &= 
 WENO5^- \left[ z^-_{i-2}, z^-_{i-1}, z^-_{i}, z^-_{i+1}, z^-_{i+2} \right].
\end{align*}
See Appendix \ref{app:weno-reconstruct} for a description of this function.
Define $\hat{z}_{i-1/2} := \hat{z}^+_{i-1/2}  + \hat{z}^-_{i-1/2}$.

\item
Using the same projection matrix, $R_{i-1/2}$, project characteristic variables 
back onto the conserved variables
\begin{equation*}
    \hat{F}_{i-1/2} := R_{i-1/2} \cdot \hat{z}_{i-1/2}.
\end{equation*}

\end{enumerate}


To summarize, given \emph{any} modified flux function $F^n$, we compute
$\hat{F}^n_{i-1/2}$ with WENO reconstructions applied to flux-split
characteristic decompositions.  The result is inserted into
\eqref{eqn:1D_system.TI-cons} to update the solution.  This procedure can be
carried out independent of the choice of how the temporal integrals are
discretized.  As pointed out in Remark \ref{rmk:reduction-to-euler}, this
formulation reduces to classical WENO with forward Euler time-discretization
if the time-averaged fluxes are approximated with constants.  
We present two methods of increasing the temporal accuracy: a Taylor and a
Runge-Kutta example.

\subsection{A Taylor series discretization of the 1D Picard integral formulation}
\label{subsec:1D-Taylor}


Consider a Taylor expansion of $f$ centered at $t=t^n$, and 
define an approximation to the time-averaged flux as
\begin{equation}\label{eqn:1D_system.TI-F}
  F_T^n(x) := f( q(t^n,x) ) 
    + \frac{\dt}{2!}   \frac{df}{dt} ( q(t^n,x) ) 
    + \frac{\dt^2}{3!} \frac{d^2f}{dt^2}( q(t^n,x) ) = F^n + \BigOh(\dt^3).
\end{equation}
%
%
These temporal derivatives can be computed via the Cauchy-Kowalewski procedure.
For example, in 1D the first two derivatives are
\begin{align}
\label{eqn:time-derivs}
   \frac{df}{dt}     &= \pd{f}{q} \cdot q_{,t} = - \pd{f}{q} \cdot f_{,x} \\
   \frac{d^2f}{dt^2} &= \pdn{2}{f}{q} \cdot \bigl(f_{,x}\,, f_{,x}\bigr) 
                      + \pd{f}{q} \cdot \left( \pd{f}{q} \cdot f_{,x} \right)_{,x} \\ \nonumber
                     &= \pdn{2}{f}{q} \cdot \bigl(f_{,x}\,, f_{,x}\bigr) 
                      + \pd{f}{q} \cdot \left( \pdn{2}{f}{q} \cdot \left(
                      q_{,x},\, f_{,x} \right) +  \pd{f}{q} \cdot f_{,xx} \right),
\end{align}
where $\pd{f}{q} \in \mathbb{R}^{m\times m}$ is the Jacobian matrix of $f$, 
and $\pdn{2}{f}{q} \in \mathbb{R}^{m\times m\times m}$ is the Hessian 
tensor\footnote{Given two vectors $u,v\in\R^m$, the Hessian product is defined
as the vector whose $i^{th}$-component is $\left[ \pdn{2}{f}{q} \cdot ( u, v )
\right]_i := \sum_{j=1,k=1}^m \frac{ \partial^2f_i }{ \partial q_j \partial
q_k} u_j v_k$.}.
Further derivatives produce tensors that grow exponentially in size.
We compute point values ${F}_i^n := {F_T}^n( x_i )$ 
using the following finite difference formulas on a single 5-point stencil:
\begin{subequations}
\label{eqn:u-deriv}
\begin{align}
\label{eqn:ux}
u_{i,\, x} &:= \frac{1}{12 \dx} \left( 
        u_{i-2} - 8  u_{i-1} 
      + 8  u_{i+1} - u_{i+2} 
  \right) =
  u_{\, ,\, x}( x_i ) + \BigOh\left( \dx^4 \right)
  \\
\label{eqn:uxx}
u_{i,\, xx} &:= \frac{1}{12 \dx^2} \left( 
      - u_{i-2} 
      + 16 u_{i-1} 
      - 30 u_{i}
      + 16 u_{i+1} - u_{i+2} 
  \right)
  = u_{\, ,\, xx}( x_i ) + \BigOh\left( \dx^4 \right).
\end{align}
\end{subequations}
Higher derivatives can be included using the same stencil.  They will start to
lose single orders of accuracy, but because those derivatives
are multiplied by increasing powers of $\dt = \BigOh(\dx)$ the overall order 
of accuracy is retained.  This is consistent with previous results in the
literature \cite{QiuShu03,QiuDumbserShu05,JiangShuZhang13,SeGuCh13}.
This defines a single value for each $F^n_i$, and therefore 
any stencil can be used without violating mass conservation.

It bears notice to compare the present Taylor discretization with similar
finite difference Lax-Wendroff discretizations for the same problem.
For example, similar to Jiang, Shu and Zhang's recent work
\cite{JiangShuZhang13}, we have taken care to expand the second spatial
derivative involving $\left( \pd{f}{q} \cdot f_{,x} \right)_{,x}$ because this
allows us to compute the first, second and possibly high-order 
derivatives on the same stencil by differentiating the same interpolating
polynomial.  Without expansion, one could compose a first-derivative finite
difference stencil with itself, but that increases the size of the
computational stencil.  Indeed, in Qiu and Shu's earlier work \cite{QiuShu03},
the authors perform compositions of finite differences that creates a
rather large effective stencil.  Moreover, their method requires the use of
central finite differences in order to retain mass conservation, whereas our
scheme could have used any approximation to the time-averaged flux.  The primary
difference that the Taylor discretization of the Picard integral formulation
carries is its generality.


In \S\ref{sec:modified-flux-2d} we follow the direct extension of this method to
the two-dimensional case, but first we present an alternative MOL type of
method that can be derived from the Picard integral formulation.

\subsection{A Runge-Kutta discretization of the 1D Picard integral formulation}
\label{subsec:1D-RK}

As an alternative to the single-stage, single-step multiderivative Taylor
method from the previous section, one could pursue a high-order collocation
type of method to discretize the time-averaged fluxes.  For example, an
application of the fourth-order Simpson's rule to \eqref{eqn:picard1d-b} 
would define
\begin{equation}
\label{eqn:simpson}
    F^n_{S}(x) := \frac{1}{6} \left[
        f( q(t^n,x) ) + 4 f( q(t^{n+1/2},x) ) + f( q( t^{n+1}, x ) )
    \right]
    = F^n + \BigOh( \dt^4 ),
\end{equation}
where $t^{n+1/2} = t^n + \dt/2$.  Given the prohibitively expensive cost of
this implicit scheme, we would prefer an explicit reinterpretation of
\eqref{eqn:simpson}.  For example, it is well known that when the classical
four-stage, fourth-order Runge-Kutta method (RK4) is applied to an ODE whose
right hand side depends on time only, then Simpson's rule and RK4 are
identical.  Therefore, we will interpret the final step in a
Runge-Kutta scheme as the approximation to an integral, which allows us to
derive a new class of Runge-Kutta type methods from the Picard integral
formulation.

We begin our discussion with a focus on the temporal discretization.  For
example, the RK4 method applied to \eqref{eqn:1D_system.PDE} defines the first
stage as $q^{(1)}(x) = q(t^n, x)$.  Further stages are given by 
\begin{subequations}
\label{eqn:rk4-stages}
\begin{align}
q^{(2)}(x) = q(t^n, x) - \frac{\dt}{2} \Par{ f( q^{(1)}(x) ) }_{,x}, \\
q^{(3)}(x) = q(t^n, x) - \frac{\dt}{2} \Par{ f( q^{(2)}(x) ) }_{,x}, \\
q^{(4)}(x) = q(t^n, x) - \dt           \Par{ f( q^{(3)}(x) ) }_{,x},
\end{align}
\end{subequations}
and the final update is given by
\begin{equation}
\label{eqn:rk4-update}
       q(t^{n+1},x) = q(t^n,x) - \frac{\dt}{6}\left[
           f\left( q^{(1)} \right)_{,x} + 2 \left( 
           f\left( q^{(2)} \right)_{,x} + 
           f\left( q^{(3)} \right)_{,x} \right) + 
           f\left( q^{(4)} \right)_{,x} 
           \right].
\end{equation}
Equations \eqref{eqn:rk4-stages} and \eqref{eqn:rk4-update} define the
classical MOL temporal discretization of finite difference, finite volume or
discontinuous Galerkin methods, where each continuous spatial differentiation
constitutes a linear (in some cases of DG), or non-linear (in the case of FD or
FV-WENO) discrete differentiation operator.  In any case, the Picard integral
formulation will allow us to view \eqref{eqn:rk4-update} from a new angle.

For example, in place of updating our solution with \eqref{eqn:rk4-update}, 
we define 
\begin{equation}
\label{eqn:ta-rk}
    F^n_{RK}( x_i ) := \frac{1}{6} \left[ f\left( q^{(1)}(x_i) \right) + 
    2 \left( f\left( q^{(2)}(x_i) \right) + f\left( q^{(3)}(x_i)\right)
    \right) + f\left( q^{(4)}(x_i) \right) \right]
\end{equation}
which is a high-order approximation to the exact time-averaged flux.  After
inserting \eqref{eqn:ta-rk} into \eqref{eqn:picard1d-a}, we apply the
conservative finite difference WENO method as described in
\S\ref{subsec:fd-weno-ta} to the result, which updates our solution after each
stage value has been computed.  It remains to define how each stage value is
computed.  

If each stage value in \eqref{eqn:rk4-stages}, were computed with the full
FD-WENO procedure, then the method described so far would be computationally
equivalent to applying a standard MOL discretization.  However, in the Picard
integral formulation, we have the leeway to modify the fluxes and therefore can
reduce the computational cost of the method.  For example, we find that for
each stage in Eqn. \eqref{eqn:rk4-stages}, there is no need to project onto
the characteristic variables in Step 3 of \S\ref{subsec:fd-weno-ta},
and therefore Steps 1,2 and 5 of \S\ref{subsec:fd-weno-ta} can be
skipped.  The end result is that {\bf we can construct a method that requires
only one characteristic projection per time step} in place of a total of four
characteristic projections.  This is a large savings for systems of equations,
given that matrices for each flux interface as well as the multiplication of
each of these matrices over the footprint of each interface value
can be avoided.  Additionally, one
could consider replacing the non-linear fifth-order reconstruction procedure
in Step 4 with other methods.  However, after numerical experimentation, we
have found that further limiting in the form of a non-linear reconstruction is
necessary in order to obtain non-oscillatory behavior for methods with
sufficiently large time steps.  

We repeat that many variations of the methods presented thus far can be
derived from the Picard integral formulation.  We simply advocate the use of
constructing discretizations for time-averaged fluxes in place of the usual
``frozen-in-time'' approximations.  Extensions to 2D are straightforward, 
and will be presented in \S\ref{sec:modified-flux-2d}.



\section{Linear stability analysis}
\label{sec:stability}

Linear stability analysis for finite difference WENO methods is difficult
given that the WENO reconstruction procedure is a \emph{non-linear} operator.
In this section, we follow the common convention of assuming the so-called
\emph{linear weights} \cite{WaSp07,MoMaRu11}, which are valid in smooth
regions. Additionally, we assume the ideal wave speed case where we do not
increase the value of $\alpha$ in Step 2 of \S\ref{subsec:fd-weno-ta}.

A typical stability analysis usually relies on the two step procedure: i) determine
the eigen-spectrum of the spatial discretization operator, followed by ii)
consider the test problem $y' = \lambda y$ and verify that the region of
absolute stability for the ODE solver contains each eigenvalue from the
spectrum of the spatial discretization operator.  At first glance, it is not
readily obvious that the RK formulations derived from the Picard integral
follow this same recipe.  However, it will be shown in this section that they
have identical stability regions to their MOL counterparts, under the
assumption of linear reconstruction weights.  However, the
Taylor formulations derived from the Picard integral formulation carry an
added difficulty which will be addressed at the end of this section.

\subsection{Linear stability analysis for the Runge-Kutta discretization}

In this section, we will show that Runge-Kutta methods constructed from the
Picard integral formulation have identical stability regions to their
classical MOL counterparts.  Without much added difficulty, we show the
generic case in place of focusing on fourth-order Runge-Kutta, and in turn,
this defines the extension of the method already presented in 
\S\ref{subsec:1D-RK}.

To begin, consider scalar advection,
\begin{equation}
\label{eqn:1dCCadv}
    q_{, t} + \left( u q \right)_{, x} = 0, \quad x \in \R,
\end{equation}
whose flux function is defined as
$f(q) = u q$, with $u \in \R$ being a constant.  
The application of a Runge-Kutta method in
a classical MOL formulation produces a final update of the form
\begin{equation} \label{eqn:rk-update-generic}
    q^{h, n+1} = q^{h,n} + \dt \sum_{j=1}^s b_j \L\left( q^{h,(j)} \right),
\end{equation}
where $q^{h,n} \approx q(t^n,x)$ is the current approximation to the vector of
conserved variables, 
$\L( q^h ) \approx -f(q)_{,x}$ is the discrete derivative of the flux
function, 
and $\{ b_j \}_{1\leq j \leq s}$ are
the coefficients from the Butcher tableau of the method.  
The RK discretization of the PIF formulation computes
stage values $q^{h, (j)}$ that are identical to those computed from the
MOL discretization.
This is because the RK discretization of PIF neglects characteristic
projections, and \eqref{eqn:1dCCadv} is
a scalar problem.
Therefore, the only possible difference would be the final update, which 
we now show is identical.

The proposed RK methods derived from the Picard integral formulation update 
the solution with 
\begin{equation}
\label{eqn:rk-update-pif}
    q^{h,n+1} = q^{h,n} - \dt \left( \sum_{j=1}^s b_j f( q^{h, (j)} ) \right)_{,x}
\end{equation}
in place of \eqref{eqn:rk-update-generic}.  The derivative in
\eqref{eqn:rk-update-pif} would normally be interpreted as a non-linear
operator, but under the assumption of linear weights, we may proceed further
with the analysis.
Now, consider the \emph{linear} discrete spatial discretization operator 
$A \approx \partial_x$ that approximates the derivative of the function.
After inserting $f(q) = u q$ into \eqref{eqn:rk-update-pif}, the discrete
update for the proposed class of methods becomes
\begin{equation}
\label{eqn:rk-linear-final}
    q^{h, n+1} = q^{h,n} - \dt A \left( \sum_{j=1}^s b_j u q^{h, (j)}     \right)
               = q^{h,n} - \dt   \left( \sum_{j=1}^s b_j A( u q^{h,(j)}) \right).
\end{equation}
This is identical to \eqref{eqn:rk-update-generic} after inserting
$\L( q^{h, (j)} ) = - A( u q^{h, (j)} )$, and therefore the two schemes have
identical stability limits.  The primary difference is that the second
equality in \eqref{eqn:rk-linear-final} is only valid under the assumption of
linear weights.  The non-linear WENO reconstruction procedure that
computes the derivatives of the solution will make the methods different in the
presence of shocks.  A complete analysis of the \emph{non-linear} method is
beyond the scope of the present work, but would follow according to recent
results \cite{WaSp07,MoMaRu11}, which first requires an analysis of the linear
weights.

\subsection{Linear stability analysis for the Taylor discretization}

The Taylor series methods that discretize time with the Lax-Wendroff
procedure require a deeper analysis.  To see the added difficulty,
recall that the semi-discrete version of \eqref{eqn:1dlinear-discrete} is
given by
\begin{equation}
\label{eqn:1dlinear-discrete}
   q^h_{,t} = A u q^h.
\end{equation}
If a third-order Taylor method were to be directly applied to the
semi-discrete ODE in \eqref{eqn:1dlinear-discrete}, then the update would look
like
\begin{equation}
\label{eqn:1d-linear-taylora}
q^{h, n+1} 
    = \left(I + \dt u A + \frac{( \dt u A )^2}{2!} + \frac{(\dt u A)^3}{3!}
    \right) q^{h, n}.
\end{equation}
However, when performing the Lax-Wendroff procedure, higher derivatives do not
come from \eqref{eqn:1dlinear-discrete}, but from approximating higher
derivatives of the PDE.  Moreover, the Taylor discretization of the Picard
integral formation operates on the fluxes, and not on the conserved variables.
To see the difference, we factor out a single $\dt u A$ 
term from \eqref{eqn:1d-linear-taylora}, and rewrite it as
\begin{subequations}
\begin{align}
\label{eqn:linear-stab-a}
q^{h, n+1} 
    &= q^{h, n} + \dt u A \left(I + \frac{\dt u A}{2!} + \frac{(\dt u
    A)^2}{3!} \right) q^{h, n}  \\
\label{eqn:linear-stab-b}
    &\approx q^n - \dt u \partial_x \left(I - \frac{\dt u \partial_x }{2!} +
    \frac{(\dt u \partial_x)^2}{3!} \right) q^{ n}.
\end{align}
\end{subequations}
The outermost derivative is approximated with the difference operator $A$, but
the interior derivatives are approximated with Equation
\eqref{eqn:u-deriv} as opposed to
composing the conservative reconstruction procedure with itself multiple
times. The purpose of doing this was to retain a compact stencil.  As is common with all
Lax-Wendroff type of methods, we cannot simply study the eigen-spectrum of 
\eqref{eqn:1dlinear-discrete} in our analysis, and we therefore turn to a
fully discrete von Neumann analysis.


To begin, we assume the solution to Eqn. \ref{eqn:1dCCadv} has the form $q^n_i
= e^{ J k i \dx }$, where $J = \sqrt{-1}$, $i \in \Z$, $\dx \in \R_{>0}$ is
the grid spacing, and $k \in \R$ is a a single wave number \cite{book:Le02}.
If we insert this into our Taylor method, a single update can be written as
$q^{n+1}_i = g_T(k, \dx, \dt) q^n_i$, where the amplification 
factor
is given by 
\begin{equation}
\begin{split}
&g_T( k, \dx, \dt ) := \\
1 &+ \nu \left( \frac{1}{20} e^{2 J k \dx} - \frac{1}{2} e^{J k \dx} -
\frac{1}{3} + e^{-J k \dx} - \frac{1}{4} e^{-2 J k \dx} + \frac{1}{30} e^{-3 J
k \dx} \right)  \\
&+ \nu^2 \left( \frac{1}{480} e^{4 J k \dx} - \frac{3}{80} e^{3 J k \dx} +
\frac{11}{72} e^{2 J k \dx} + \frac{61}{360} e^{J k \dx} - \frac{41}{80}
\right. \\
& \left.-
\frac{1}{180} e^{-J k \dx} + \frac{121}{360} e^{-2 J k \dx} 
- \frac{1}{8} e^{-3 J k \dx} + 
\frac{31}{1440} e^{-4 J k \dx} - \frac{1}{720} e^{-5 J k \dx} \right)  \\
&+ \nu^3 \left(
 -\frac{1}{1440} e^{4 J k \dx} + \frac{13}{720} e^{3 J k \dx} - \frac{55}{432}
 e^{2 J k \dx} + \frac{71}{540} e^{J k \dx } + \frac{91}{360} \right. \\
& \left.-
 \frac{583}{1080} e^{-J k \dx} + \frac{731}{2160} e^{-2 J k \dx} -
 \frac{1}{12} e^{-3 J k \dx} + \frac{47}{4320} e^{-4 J k \dx} - \frac{1}{2160}
 e^{-5 J k \dx} \right),
\end{split}
\end{equation}
and the CFL number is defined as $\nu = u \dt / \dx$.  Details on deriving the
amplification factor were computed using the open-source symbolic toolkit SymPy and have
been omitted for brevity.  The method is stable for a given time step $\dt$
and grid spacing $\dx$ provided that $|g_T( k, \dx, \dt )| \leq 1$ for all
wave numbers $k$.  

Note that the amplification factor is a polynomial in $\nu$ and $\rho := e^{J
k \dx}$.  The degree of $\nu$ represents the temporal order, and the degree of
$\rho$ represents the footprint of the stencil.  Indeed, the fourth-order
Taylor version of our method would contain a term involving $\nu^4$, and
assuming we restrict our attention to the identical stencil used to compute
each derivative in Eqn. \ref{eqn:u-deriv}, the polynomial multiplying $\nu^4$
will have terms ranging from $\rho^4$ down to $\rho^{-5}$.

In light of attempting to find the roots of $g_T$, in Fig.~\ref{fig:vn} we plot
the absolute value of $g_T$ as a function of the CFL number, and 
wave number.  

\begin{figure}
\begin{center}
\includegraphics[width=0.4\textwidth]{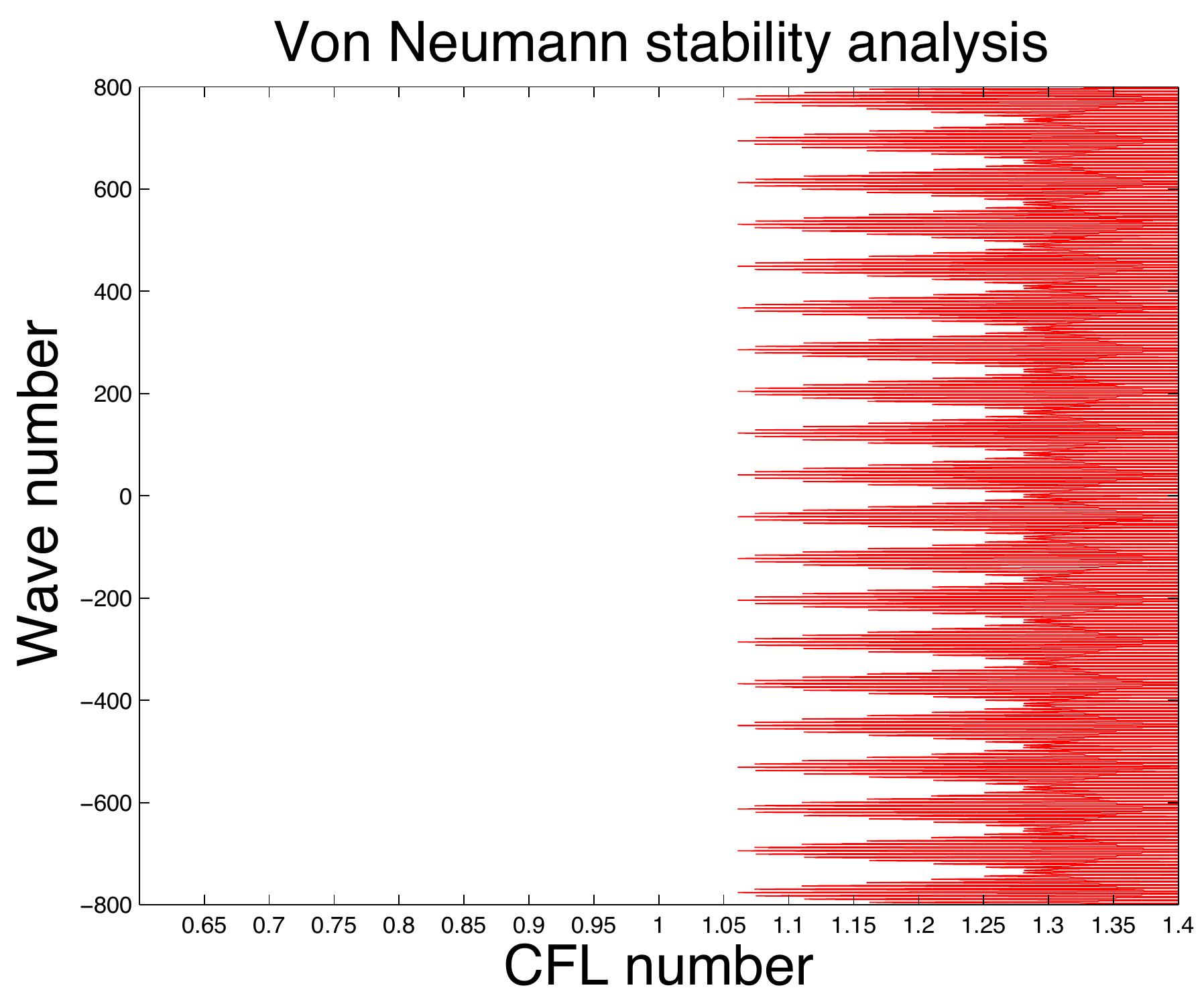}
\caption{Von Neumann stability analysis for Taylor discretization.  
Shown here are the contour lines for $|g_T| = 1$.  Without loss of generality,
we set $\dx = 1$.  Regions to the left of the
boundary are stable, and regions to the right of the region are unstable.
We observe that the largest stable time step for the third-order Taylor 
discretization that is valid for all wave numbers is slightly larger than 1.0606.
\label{fig:vn}
}
\end{center}
\end{figure}

\section{The 2D Picard integral formulation}
\label{sec:modified-flux-2d}

A 2D conservation law is given by
\begin{equation}
\label{eqn:2dhyper}
    q_{,t} + \left( f(q) \right)_{,x} 
           + \left( g(q) \right)_{,y} = 0.
\end{equation}
%
Similar to the 1D case, we define the 2D \emph{Picard integral formulation} as
\begin{subequations}
\begin{equation}
    q^{n+1} = q^n - \dt \left( F^n( x, y ) \right)_{, x} - \dt \left( G^n( x, y ) \right)_{, y},
\end{equation}
where the \emph{time-averaged fluxes} are defined by
\begin{equation}\label{eqn:2D_system.TI-MF}
    {F}^n( x, y) := \frac{1}{\dt} \int_{t^n}^{t^{n+1}} f(q(t,x,y))\, dt, \quad 
    {G}^n( x, y) := \frac{1}{\dt} \int_{t^n}^{t^{n+1}} g(q(t,x,y))\, dt.
\end{equation}
\end{subequations}

We again propose an application of the finite difference WENO method to the
time-averaged fluxes in place of the usual ``frozen in time'' approximation
to the physical fluxes.
We repeat that many options for discretizing \eqref{eqn:2D_system.TI-MF}
exist.  We begin with a description of how the 2D finite difference WENO
method can be applied under the assumption that one such time-averaged 
approximation has been made.

\subsection{2D conservative finite differences with time-averaged fluxes}
\label{subsec:TI-2D}

We start with a discretization of the domain $\Omega = [a_x, b_x] \times [a_y, b_y] \subset \R^2$ given by
\begin{subequations}
\begin{align}
  x_i &= a_x + \Par{i-\frac{1}{2}}\Delta x,
  \qquad &\Delta x = \frac{b_x-a_x}{m_x},
  \qquad &i\in\{1,\dots,m_x\}, && \quad \\
  y_j &= a_y + \Par{j-\frac{1}{2}}\Delta y,
  \qquad &\Delta y = \frac{b_y-a_y}{m_y},
  \qquad &j\in\{1,\dots,m_y\}.&
\end{align}
\end{subequations}
The finite difference method seeks pointwise approximations
$q^n_{ij} \approx q\left( t^n, x_i, y_j \right)$ to hold at each of these grid
points.

In the hyperbolic 2D case, one has two eigen-decompositions for $f'(q)$ and $g'(q)$, to
which the procedure described in \S \ref{subsec:TI-1D} is applied on a
dimension by dimension fashion.  
In place of the instantaneous point-wise values such as
$f(t^n,x_i,y_j)$ and $g(t^n,x_i,y_j)$,
we propose using suitable approximations of $F_{ij}^n$ and $G_{ij}^n$ to the time-averaged
fluxes in \eqref{eqn:2D_system.TI-MF}.  That is, we will define an update of
the form
\begin{equation}
\label{eqn:euler2d}
    q^{n+1}_{ij} = q^n_{ij}
        - \frac{\dt}{\dx} \left( \hat{F}^n_{i+1/2,j} - \hat{F}^n_{i-1/2,j}  \right)
        - \frac{\dt}{\dy} \left( \hat{G}^n_{i,j+1/2} - \hat{G}^n_{i,j-1/2} \right),
\end{equation}
where the differences in $\hat{F}^n_{i\pm 1/2, j}$ and 
$\hat{G}^n_{i, j\pm 1/2}$ approximate discrete derivatives of $F^n$ and $G^n$,
respectively, via the non-linear WENO reconstruction procedure.
Similar to 
Remark \ref{rmk:reduction-to-euler}, we point out the following special case.

\begin{rmk}
If $F^n_{ij} = f( q(t^n,x_i,y_j) )$ and $G^n_{ij} = f( q(t^n,x_i,y_j) )$,
then \eqref{eqn:euler2d} is identical to a single Euler step of a classical
RK-WENO method.  It is the inclusion of higher-order terms in the
approximation of the time-averaged fluxes that increases the temporal order of
accuracy.
\end{rmk}

\begin{rmk} This method is automatically mass conservative given \emph{any}
time-averaged flux.  This follows from the 2D extension of Theorm \ref{thm:1dcons}.
\end{rmk}

We repeat that many options exist for constructing the time-averaged fluxes.
At present, we offer Taylor methods and Runge-Kutta methods; additional
methods are currently under investigation, and will be presented in a follow
up work.

\subsection{A Taylor series discretization of the 2D Picard integral formulation}

We define Taylor discretizations of the time-averaged fluxes with
\begin{subequations}
\begin{align}
\label{eqn:2D_system.TI-F}
  F_T^n(x,y) := f( q(t^n,x,y) ) 
    + \frac{\dt}{2!}   \frac{df}{dt} ( q(t^n,x,y) ) 
    + \frac{\dt^2}{3!} \frac{d^2f}{dt^2}( q(t^n,x,y) ) = F^n + \BigOh(\dt^3); \\
\label{eqn:2D_system.TI-G}
  G_T^n(x,y) := g( q(t^n,x,y) ) 
    + \frac{\dt}{2!}   \frac{dg}{dt} ( q(t^n,x,y) ) 
    + \frac{\dt^2}{3!} \frac{d^2g}{dt^2}( q(t^n,x,y) ) = G^n + \BigOh(\dt^3).
\end{align}
\end{subequations}
Similar to the 1D case, analytical temporal derivatives can be computed via the 
Cauchy-Kowalewski procedure.
Derivatives of the first component in the flux function $f$ are 
given by
\begin{align}
\label{eqn:time-derivs-2df}
   \frac{df}{dt}     &= -\pd{f}{q} \cdot \left( f_{\!,\,x} + g_{\!,\,y} \right),  \\
   \frac{d^2f}{dt^2} &= \pdn{2}{f}{q} \cdot \bigl( f_{,x} + g_{,y}\,, f_{,x} + g_{,y} \bigr) 
                      - \pd{f}{q} \cdot \left( f_{,x} + g_{,y} \right)_{,t},
\end{align}
where an expansion of the final time derivative is given by
\begin{align}
\label{eqn:temporal-derivs-2dfpg}
\left( f_{,x} + g_{,y} \right)_{,t} =    
&  \pdn{2}{f}{q} \cdot \left( q_{,x}, f_{,x} + g_{,y} \right) + \pd{f}{q} \cdot \left( f_{,xx} + g_{,xy} \right)
+ \\ \nonumber 
&  \pdn{2}{g}{q} \cdot \left( q_{,y}, f_{,x} + g_{,y} \right) + \pd{g}{q} \cdot \left( f_{,xy} + g_{,yy} \right).
\end{align}
Note that third-order accuracy requires an approximation to the mixed
derivatives $f_{,xy}$ and $g_{,xy}$. 
%
Similarly, the temporal derivatives of $g$ can be expressed with
\begin{align}
\label{eqn:time-derivs-2dg}
   \frac{dg}{dt}     &= -\pd{g}{q} \cdot \left( f_{\!,\,x} + g_{\!,\,y} \right),  \\
   \frac{d^2g}{dt^2} &= \pdn{2}{g}{q} \cdot \bigl( f_{,x} + g_{,y}\,, f_{,x} + g_{,y} \bigr) 
                      - \pd{g}{q} \cdot \left( f_{,x} + g_{,y} \right)_{,t}.
\end{align}

We compute the first- and second-derivatives of $f, g$ and $q$ 
on a dimension by dimension fashion, with the coefficients prescribed by 
equations \eqref{eqn:ux}-\eqref{eqn:uxx}.  
{\bf The singular departure from the 1D case is that 
we require an approximation for the cross-derivative terms.}  These could be
computed through a composition of finite differences or inclusion of off
diagonal terms in the stencil.  In order to retain a compact stencil, we
choose the latter approach, to which we apply the finite difference formula 
\begin{equation}
u_{,\, xy}\left( x_i, y_j \right) \approx u_{ij,\, xy} := \frac{1}{4 \dx \dy } 
    \left(  
        u_{i+1,\, j+1} - u_{i-1,\, j+1} - u_{i+1,\, j-1} + u_{i-1,\, j-1}
    \right). 
\end{equation}
Given that they are eventually multiplied by a factor of 
$\dt^3 = \BigOh(\dx^3)$, we only compute these to second-order.
%
In the past, these cross-derivatives have been computed
by applying a single finite difference stencil multiple times \cite{QiuShu03},
which results in a large effective stencil.

The complete scheme is given by inserting
\begin{equation}
F^n_{ij} := F_T^n( x_i, y_j ) \quad \text{and} \quad
G^n_{ij} := G_T^n( x_i, y_j )
\end{equation}
into the conservative finite difference reconstruction procedure.

This current proposal provides an additional departure from previous finite difference
Lax-Wendroff works \cite{QiuShu03,JiangShuZhang13}, because
we use a single inexpensive stencil to compute the time-averaged fluxes
everywhere in the domain, and then follow that with a
single WENO reconstruction.
Moreover, 
from the 2D extension of Theorem \ref{thm:1dcons},
further limiting in the form of non-conservative WENO differentiation is an option
for us to compute higher derivatives of the flux function.
The overall effective stencil is presented in Fig.~\ref{fig:stencil}, and is much smaller than
Qiu and Shu's original method \cite{QiuShu03}, and similar in size but different
in shape to Jiang Shu and Zhang's recent proposal \cite{JiangShuZhang13}.
The shaded cells indicate the size of the stencil used in the first pass, and
the red `X' terms indicate the stencil required for the WENO reconstruction.
An implementation that uses a single loop over the domain would require access
to every circled element to update the solution at one point.

\subsubsection{A brief note on implementing the Taylor method}

One simple implementation of the proposed Taylor method 
that requires minimal modification to existing codes 
uses two loops over the computational domain:
\begin{enumerate}
    \item Compute the time-averaged fluxes $F^n_{ij}$ and $G^n_{ij}$ with finite
    differences on the conserved variables.  The benefit is that
    no characteristic projections or non-linear reconstructions are necessary.
    However, Taylor methods in particular require additional algebraic
    operations and therefore the
    computational expense comes from the need to evaluate high-derivatives
    using Jacobians and Hessians that increase in size.
    \item Use the time-averaged fluxes $F^n_{ij}$ and $G^n_{ij}$
    to perform the usual WENO reconstruction with an existing sub-routine.
    The cost of this step is equivalent to that of a single WENO
    reconstruction.
\end{enumerate}
We remark that the storage required for this particular implementation of the
Taylor method is competitive with low-storage Runge-Kutta methods
\cite{Williamson80,Ke08,Ke10,NieDieBu12}.  In particular, any three-stage
Runge-Kutta method applied to the classical MOL formulation will require a
total of three WENO reconstructions, whereas we make a first pass with an
inexpensive linear finite difference stencil, and then follow that up with a
single WENO reconstruction.  In addition, because it is a single-step Taylor
method, this method is amenable to adding adaptive mesh refinement, which is a
non-trivial task for multistage methods.

\begin{figure}
\begin{center}
\includegraphics[width=0.6\textwidth]{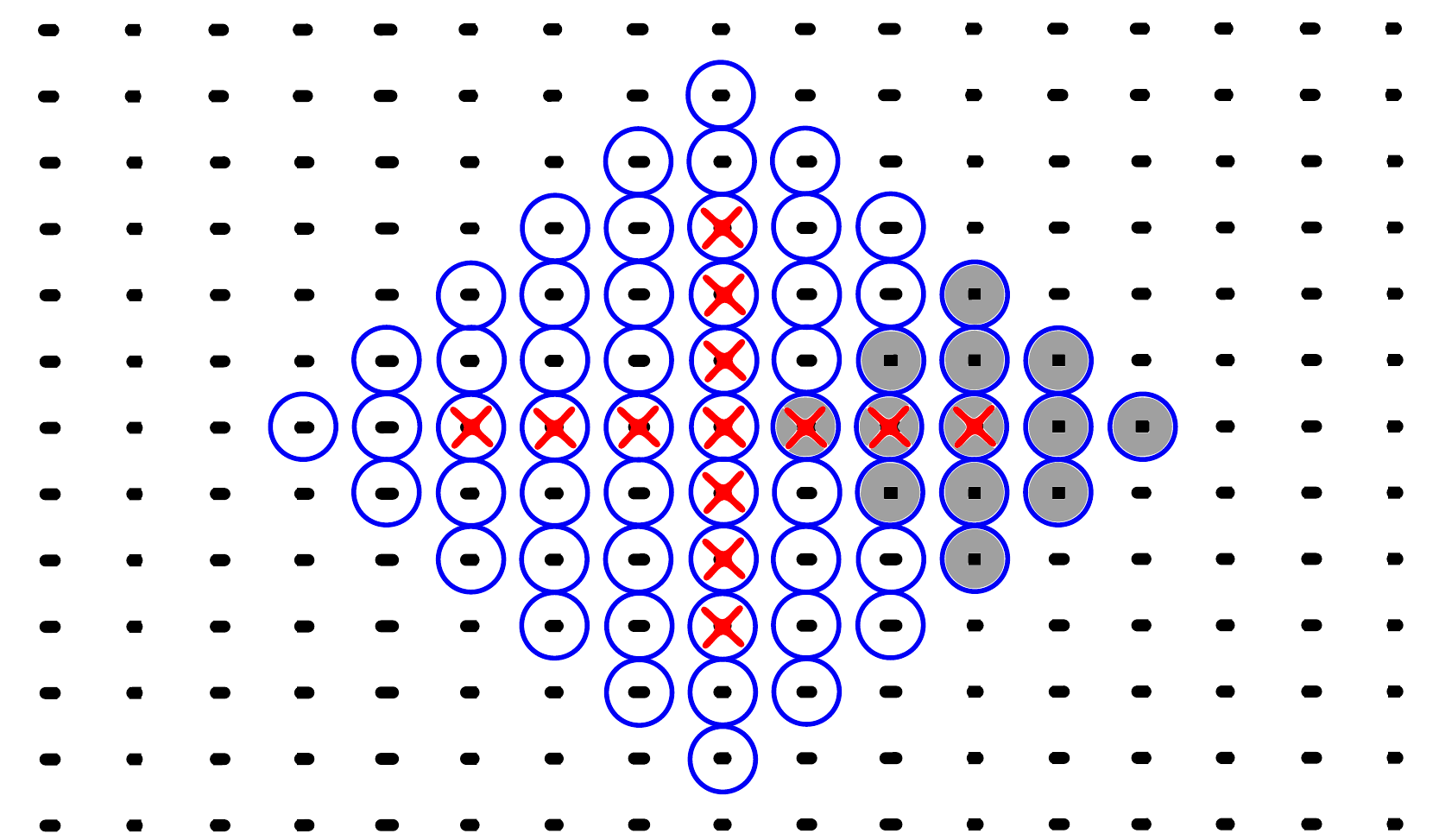}
\caption{Effective stencil for the Taylor method.  Shown here is the 
collection of all elements required to define the new solution,
$q^{n+1}_{ij}$, at a single point $(x_i, y_j)$.
Each red `X' comes from the 7-point fifth-order WENO stencil used in each
direction, where a 13-point stencil is used to define each required
$F^n_{i+s,j}$ and $G^n_{i,j+s}$.  For example, the elements required to define
$F^n_{i+3,j}$ have been shaded.  Note that the cross-derivative terms have
been explicitly included in the stencil as opposed to relying on many
compositions of the WENO stencil to produce these.  \label{fig:stencil}
}
\end{center}
\end{figure}

\subsection{A Runge-Kutta discretization of the 2D Picard integral formulation}

The extension of the RK formulated PIF-WENO method to 2D is straightforward.
For example, the 2D analogue of Eqn. \eqref{eqn:ta-rk} 
from \S\ref{subsec:1D-RK} 
is to define the time-averaged fluxes as
\begin{subequations}
\begin{align}
\label{eqn:ta-rk-2D}
  F_{RK}^n(x_i,y_j) 
      &:= \frac{1}{6} \left[ f\left( q^{(1)}_{ij} \right) + 
        2 \left( f\left( q^{(2)}_{ij} \right) + f\left( q^{(3)}_{ij}\right) \right) +
        f\left( q^{(4)}_{ij} \right) \right], \\
  G_{RK}^n(x_i,y_j) 
      &:= \frac{1}{6} \left[ g\left( q^{(1)}_{ij} \right) + 
        2 \left( g\left( q^{(2)}_{ij} \right) + g\left( q^{(3)}_{ij}\right) \right) +
        g\left( q^{(4)}_{ij} \right) \right],
\end{align}
\end{subequations}
which are high-order approximations to the exact time-integrated flux
functions, $F^n$ and $G^n$ in \eqref{eqn:2D_system.TI-MF}.
Again, the values $q_{ij}^{(k)}$, for $k=1, \dots 4$ are computed from the RK
stages, which do not require characteristic projections.  {\bf Identical to the 1D
case, the complete WENO reconstruction procedure is only applied to the final
stage.}  Similar to classical finite difference WENO methods, this
reconstruction procedure is applied in a dimension by dimension fashion, which
requires projections onto the characteristic variables.  The difference here
is that we operate on time-averaged fluxes, $F^n_{RK}$ and $G^n_{RK}$, as
opposed to ``frozen in time'' flux values.  

When comparing the two methods presented in this work, the primary difference
the RK method has from the Taylor method is that the integral for the
time-averaged flux is computed
through a collocation type of procedure, which requires computing approximate
stage value for the flux functions.  Again, we find that it is not necessary
to perform characteristic projections of the fluxes and conserved variables
for each intermediate stage value.  

\section{Numerical Results}
\label{sec:numerical-results}

\subsection{1D Burger's equation}
\label{subsec:burgers}

We begin our numerical results section with a convergence study on smooth
solutions to inviscid Burger's equation:
\begin{equation}
	q_{,t} +  \left( \frac{1}{2} q^2 \right)_{,x} = 0.
\end{equation}
We take the computational domain to be $[0,2]$ and apply periodic boundary
conditions.  The initial conditions are prescribed by
$q(t=0, x) = 0.5 + \sin\left( \pi x \right)$. 
For our convergence study, 
we choose a final time of $t=0.5 \pi^{-1}$ before any shocks develop.

Exact solutions to this problem can be found implicitly by applying 
the method of characteristics \cite{book:Le02}.
With initial conditions defined by $q_0( s ) = q(0, s )$, 
the exact solution is 
\begin{align} \label{eqn:burger-implicit}
	q(t,x) = q_0\left( x - t \cdot q_0( \xi ) \right),
\end{align}
where
$\xi$ is an implicit solution to $\xi = x - t \cdot q_0(\xi)$.  
To construct exact solutions, we solve \eqref{eqn:burger-implicit}
at each grid point using Newton iteration with a tolerance of $10^{-15}$.

In 1D, we use relative errors with the $L^1$ norm defined by 
point-wise values
\begin{equation}
\label{eqn:error1d}
   \text{Error } := 
   \frac{ \dx \sum_{i=1}^{m_x} \left|q^n_i - q(t^n, x_i) \right| } {
          \dx \sum_{i=1}^{m_x} |q(t^n, x_i)|  },
\end{equation}
and we define the CFL number as the dimensionless quantity
\begin{equation}
\label{eqn:cfl1d}
    \nu := \frac{\dt}{\dx} \max_i \left\{ \alpha^*_{i-1/2} \right\}.
\end{equation}
A convergence study for the Taylor discretization of PIF-WENO is presented in
Table \ref{table:burger-convergence}, and a convergence study for the RK 
discretization of PIF-WENO is presented in Table \ref{table:burger-convergence-rk}.

In Jiang, Shu and Zhang \cite{JiangShuZhang13}, the authors compare 
results of their finite difference WENO method in a side-by-side table 
against the older Qiu and Shu method \cite{QiuShu03}.  However, the 
new simulations use a smaller CFL number of
$\nu = 0.3$ as opposed to the value of $\nu = 0.5$ reported in 
the earlier paper. For
completeness, in Table \ref{table:burger-convergence} we present results 
of our method for both CFL numbers. We make two brief observations:
\begin{itemize}
\item Large CFL numbers bring out the temporal error faster, and therefore
high-order in time is likely more important to reduce error if a user 
wishes to take large time steps.
\item For coarse grids, the difference in errors between the two CFL numbers
is quite small, indicating that in this case, high-order spatial 
accuracy is more important than a high temporal order of accuracy.
\end{itemize}
Similar to past investigations,
for coarse meshes the spatial error dominates, and therefore the algebraic
order is higher than expected.  This practice is commonplace in the
WENO literature, where authors often use lower order time-integrators such as
SSP-RK3 
\cite{ShuOsher87,GoShu98}, and present results
indicating fifth or higher order convergence rates.  We observe this
with the Runge-Kutta discretization presented in Table \ref{table:burger-convergence-rk}.
In this work, we refine our mesh until the temporal error is observable.  
\begin{table}
\centering
\normalsize
\caption{Convergence study of Taylor method for Burger's equation.
The number of mesh elements $m_x$ is given in the first column, and 
errors are defined in \eqref{eqn:error1d}.
Two different CFL numbers of $\nu = 0.3$ and $\nu = 0.5$ as 
defined in \eqref{eqn:cfl1d} have been chosen in order to offer a comparison 
with previous Lax-Wendroff based finite difference WENO methods
\cite{QiuShu03,JiangShuZhang13}.
The `Order' columns refer to the algebraic order of convergence, computed as 
the base-2 logarithm of the ratio of two successive error norms.
\label{table:burger-convergence} }

\begin{tabular}{|r||c|c||c|c|}
\hline
\bf{Mesh} 
& \bf{Error ($\nu = 0.3$) } & \bf{Order}
& \bf{Error ($\nu = 0.5$) } & \bf{Order} \\
\hline
\hline
$  10$ & $1.94\times 10^{-02}$ & --- & $2.05\times 10^{-02}$ & ---\\
\hline
$  20$ & $3.40\times 10^{-03}$ & $2.51$ & $3.61\times 10^{-03}$ & $2.51$\\
\hline
$  40$ & $3.57\times 10^{-04}$ & $3.25$ & $3.81\times 10^{-04}$ & $3.25$\\
\hline
$  80$ & $1.72\times 10^{-05}$ & $4.38$ & $1.97\times 10^{-05}$ & $4.27$\\
\hline
$ 160$ & $6.13\times 10^{-07}$ & $4.81$ & $1.24\times 10^{-06}$ & $3.99$\\
\hline
$ 320$ & $3.03\times 10^{-08}$ & $4.34$ & $1.17\times 10^{-07}$ & $3.41$\\
\hline
$ 640$ & $3.12\times 10^{-09}$ & $3.28$ & $1.42\times 10^{-08}$ & $3.04$\\
\hline
$1280$ & $3.83\times 10^{-10}$ & $3.02$ & $1.78\times 10^{-09}$ & $3.00$\\
\hline
$2560$ & $4.81\times 10^{-11}$ & $2.99$ & $2.23\times 10^{-10}$ & $3.00$\\
\hline
$5120$ & $6.02\times 10^{-12}$ & $3.00$ & $2.79\times 10^{-11}$ & $3.00$\\
\hline
\end{tabular}
\end{table}

\begin{table}
\centering
\normalsize
\caption{Convergence study of RK method for Burger's equation.
The number of mesh elements $m_x$ is given in the first column, and 
errors are defined in \eqref{eqn:error1d}.
Two different CFL numbers of $\nu = 0.5$ and $\nu = 0.8$ have been chosen 
in an effort to expose the temporal error.  We observe that for small CFL numbers
the spatial error dominates the error.
\label{table:burger-convergence-rk} }
\begin{tabular}{|r||c|c||c|c|}
\hline
\bf{Mesh} 
& \bf{Error ($\nu = 0.4$) } & \bf{Order}
& \bf{Error ($\nu = 0.8$) } & \bf{Order} \\
\hline
\hline
$  10$ & $3.85\times 10^{-02}$ & --- & $3.85\times 10^{-02}$ & ---\\
\hline
$  20$ & $3.42\times 10^{-03}$ & $3.49$ & $3.76\times 10^{-03}$ & $3.36$\\
\hline
$  40$ & $3.68\times 10^{-04}$ & $3.22$ & $4.06\times 10^{-04}$ & $3.21$\\
\hline
$  80$ & $1.78\times 10^{-05}$ & $4.37$ & $1.88\times 10^{-05}$ & $4.44$\\
\hline
$ 160$ & $5.62\times 10^{-07}$ & $4.99$ & $5.94\times 10^{-07}$ & $4.98$\\
\hline
$ 320$ & $1.21\times 10^{-08}$ & $5.53$ & $1.37\times 10^{-08}$ & $5.44$\\
\hline
$ 640$ & $2.49\times 10^{-10}$ & $5.61$ & $3.48\times 10^{-10}$ & $5.30$\\
\hline
$1280$ & $4.89\times 10^{-12}$ & $5.67$ & $1.15\times 10^{-11}$ & $4.93$\\
\hline
$2560$ & $1.35\times 10^{-13}$ & $5.18$ & $5.79\times 10^{-13}$ & $4.31$\\
\hline
$5120$ & $7.09\times 10^{-15}$ & $4.25$ & $3.44\times 10^{-14}$ & $4.07$\\
\hline
\end{tabular}
\end{table}

\subsection{The 1D Euler equations}
\label{subsec:euler}
The Euler equations describe the evolution of density $\rho$, momentum 
$\rho u$ and energy $\E$ of an ideal gas:
\begin{equation}
\left( 
\begin{array}{c}
    \rho \\ \rho u \\ \E
    \end{array}
    \right)_{, t}
    + 
    \left( 
    \begin{array}{c}
       \rho u \\ \rho u^2 + p \\ ( \E + p ) u
    \end{array}
    \right)_{, x}
    = 0,
\end{equation}
where $p$ is the pressure.
The energy $\E$ is related to the primitive variables $\rho$, $u$ and $p$ by
$\E = \frac{p}{ \gamma-1 } + \frac{1}{2}\rho u^2$, and
$\gamma$ is the ratio of specific heats.  For all of our simulations,
we take $\gamma = 1.4$.

\subsubsection{The 1D Euler equations: a Riemann problem}

We present a difficult test case.  This is commonly 
referred to as the \emph{Lax shock tube},
however, the initial conditions are those defined by 
Harten \cite{Harten78,HaEnOsCh87}:
\begin{equation}
\label{eqn:shock-harten}
   (\rho, \rho u, \E )^T = 
   \begin{cases}
   (0.445, 0.3111, 8.928)^T, \quad \text{if } x \leq 0.5, \\
   (0.5, 0, 1.4275)^T, \quad \text{otherwise}.
   \end{cases}
\end{equation}

We select $t = 0.16$ for the final time of our simulation 
\cite{QiuShu03}, and we use a computational domain of $[0,1]$
with outflow boundary conditions.
Results for our method applied to this problem are presented in 
Fig.~\ref{fig:euler-shock-tube-harten-weno}.

\begin{figure}[!htb]
\centering
\includegraphics[width=0.3\textwidth]{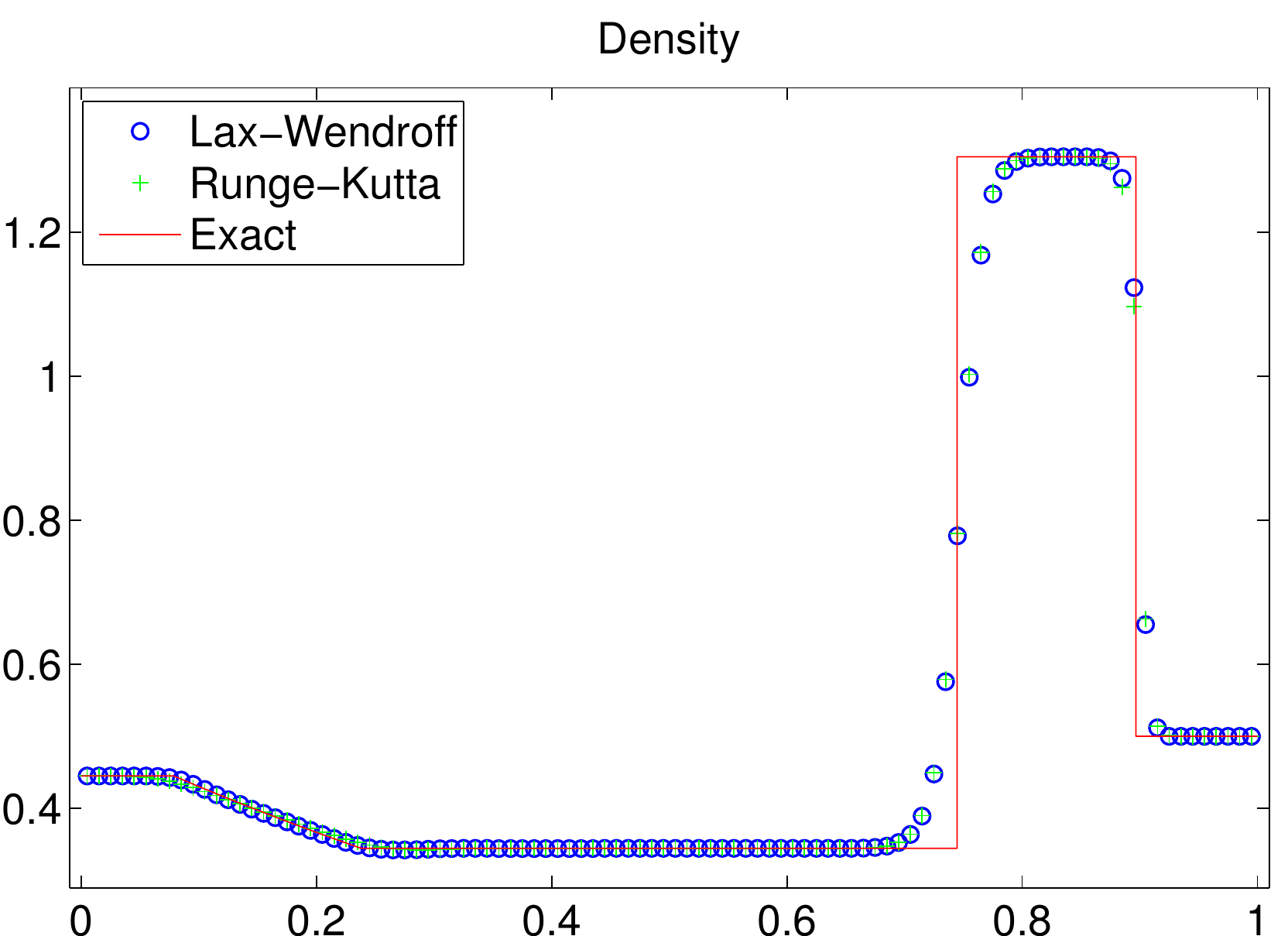}
\includegraphics[width=0.3\textwidth]{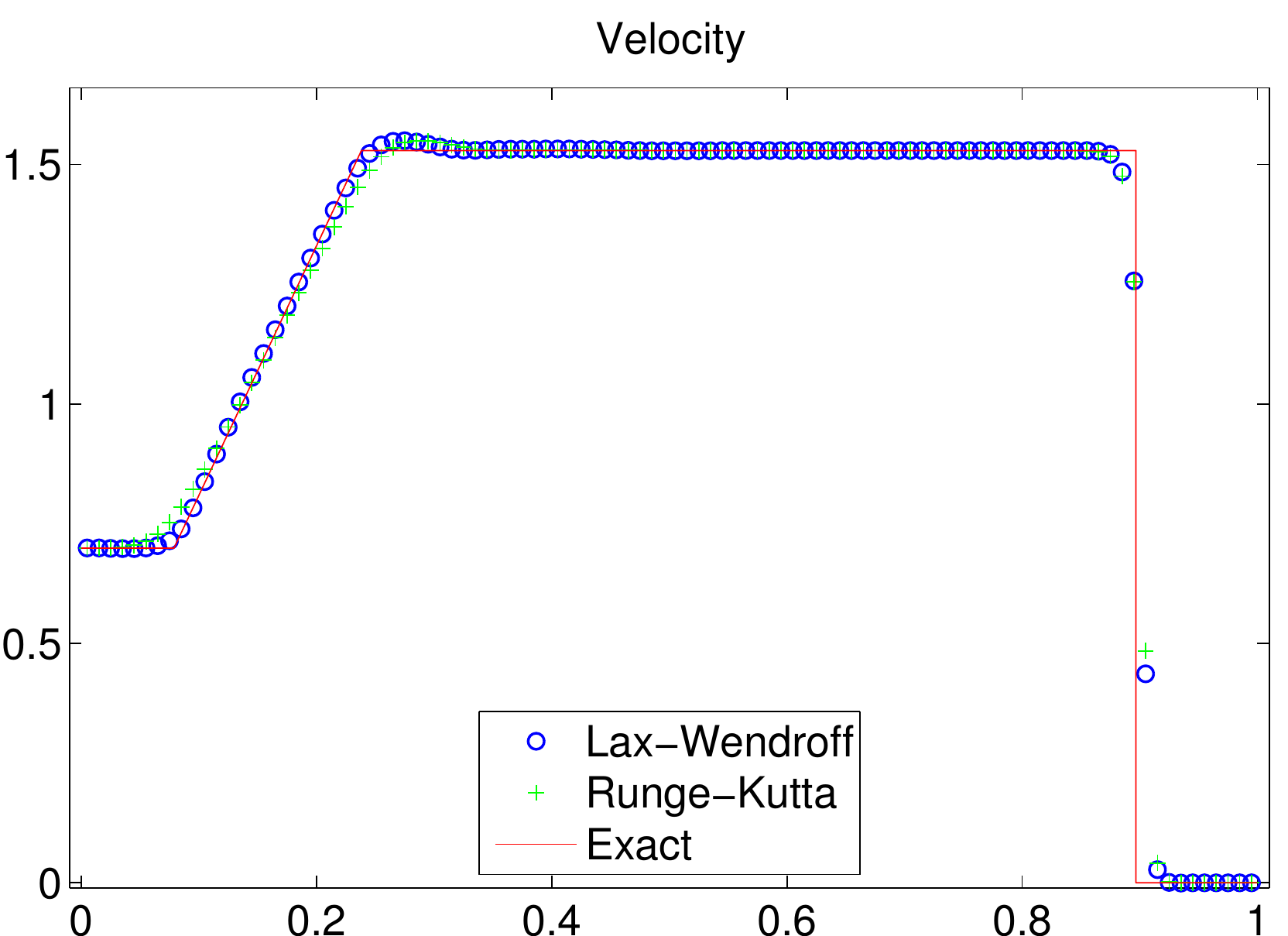}
\includegraphics[width=0.3\textwidth]{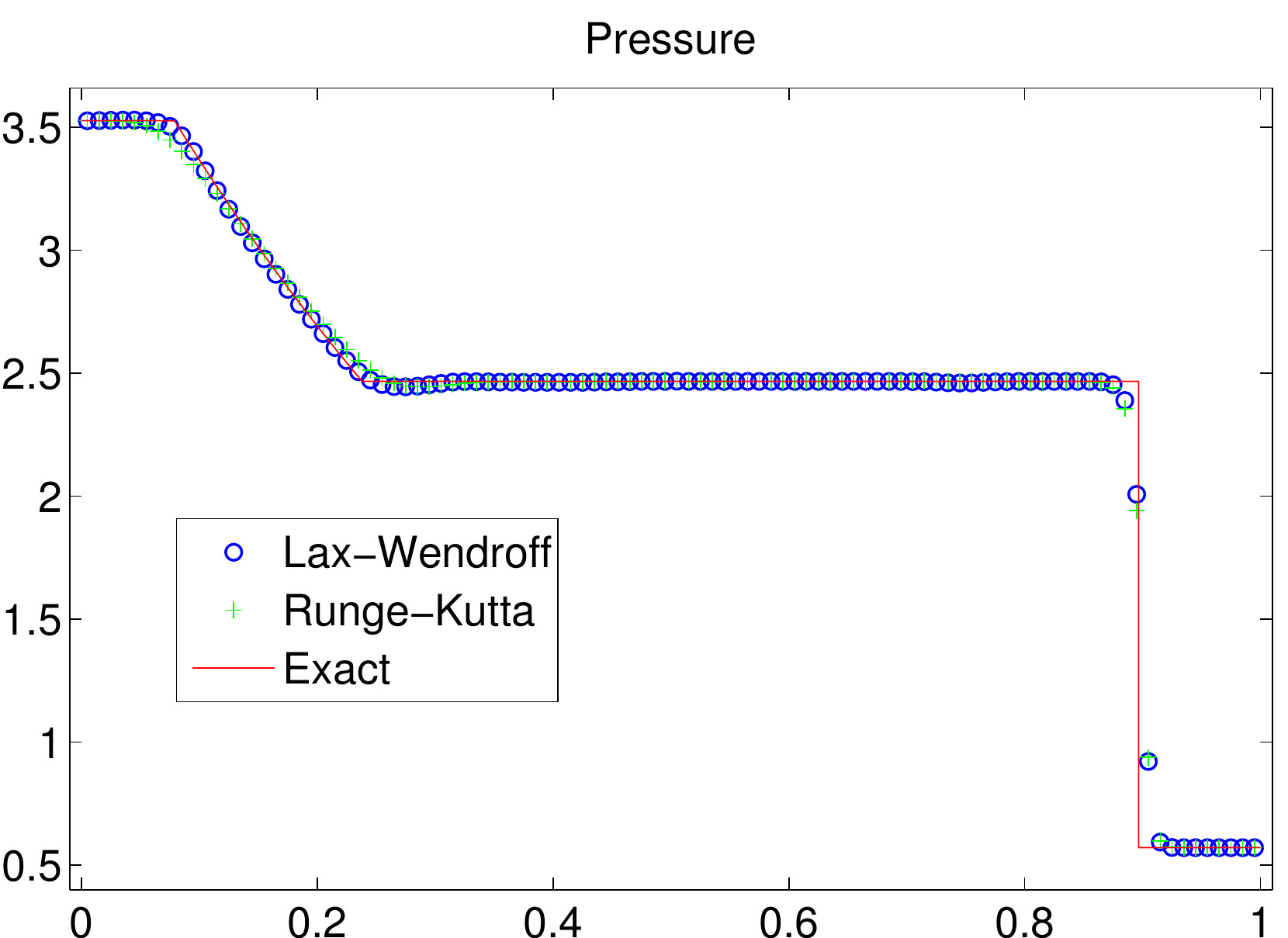}
\caption{Shock-tube Riemann problem.
Shown here are WENO simulations with Harten's initial conditions  \eqref{eqn:shock-harten}
for the shock tube problem.
We plot observable quantities from left to right:
density $\rho$, velocity $u$, and pressure $p$.
These results were obtained with a CFL number of $\nu = 0.4$, and 
$m_x = 100$ grid points.  
The exact solution (solid line)
is plotted underneath the results of our methods.
}
\label{fig:euler-shock-tube-harten-weno}
\end{figure}

\subsubsection{The 1D Euler equations: shock entropy}

Our final 1D test case is another problem that is popular in the 
literature \cite{ShuOsher89}.  The initial conditions are
\begin{align*}
   (\rho, u, p) =& \left( 3.857143, 2.629369, 10.3333 \right), \quad &x < -4, \\
   (\rho, u, p) =& \left( 1 + \epsilon \sin(5x), 0, 1 \right), \quad &x \geq -4,
\end{align*}
with a  computational domain of $[-5,5]$.  
The final time for this simulation is $t=1.8$.
With $\eps = 0$, this is a pure Mach 3 shock moving to the right.
We follow the common practice of setting $\eps = 0.2$.
Results for the new method are presented in Fig.~\ref{fig:euler-shock-entropy}.

\begin{figure}[!htb]
\centering
\includegraphics[width=0.3\textwidth]{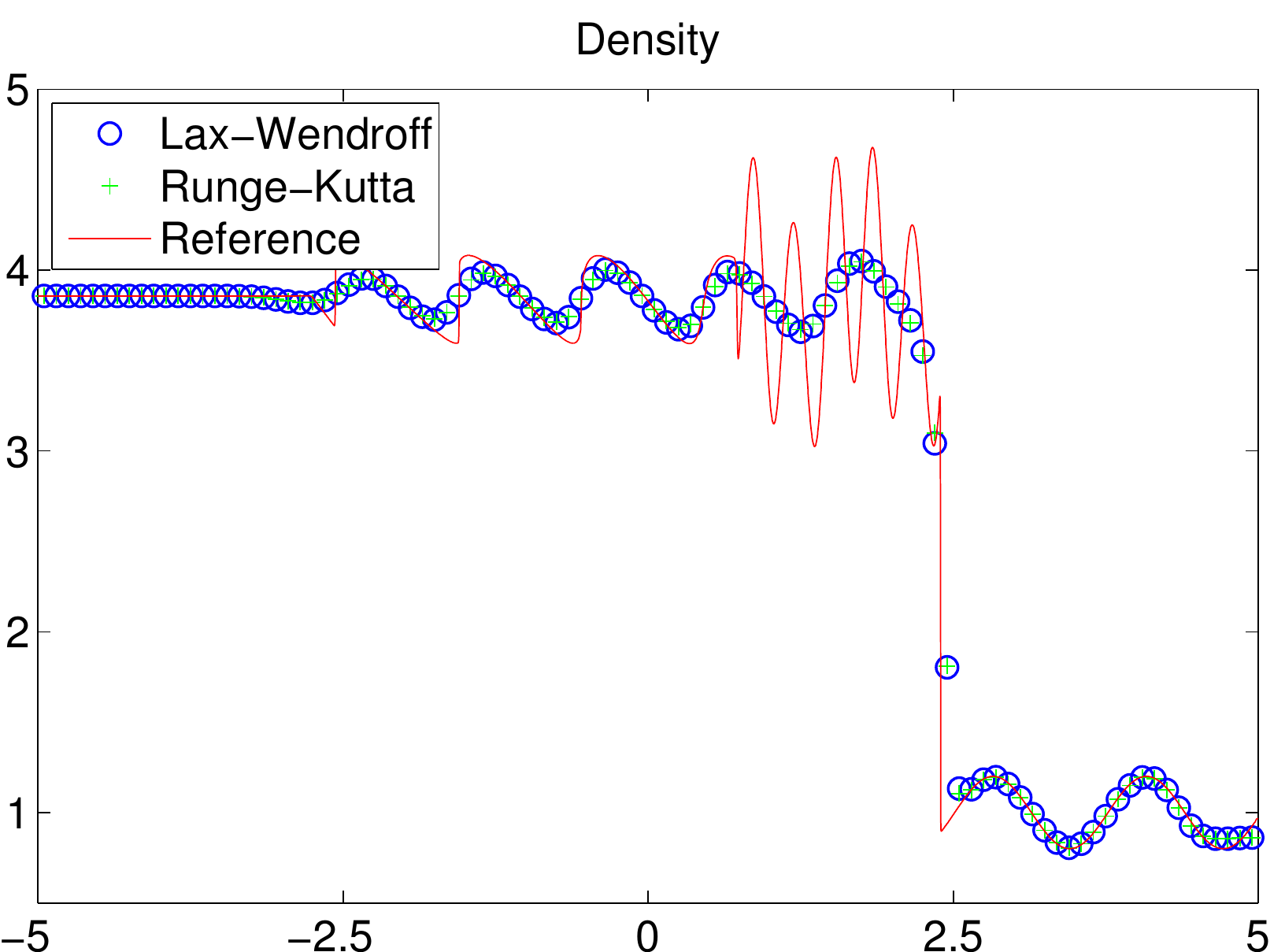}
\includegraphics[width=0.3\textwidth]{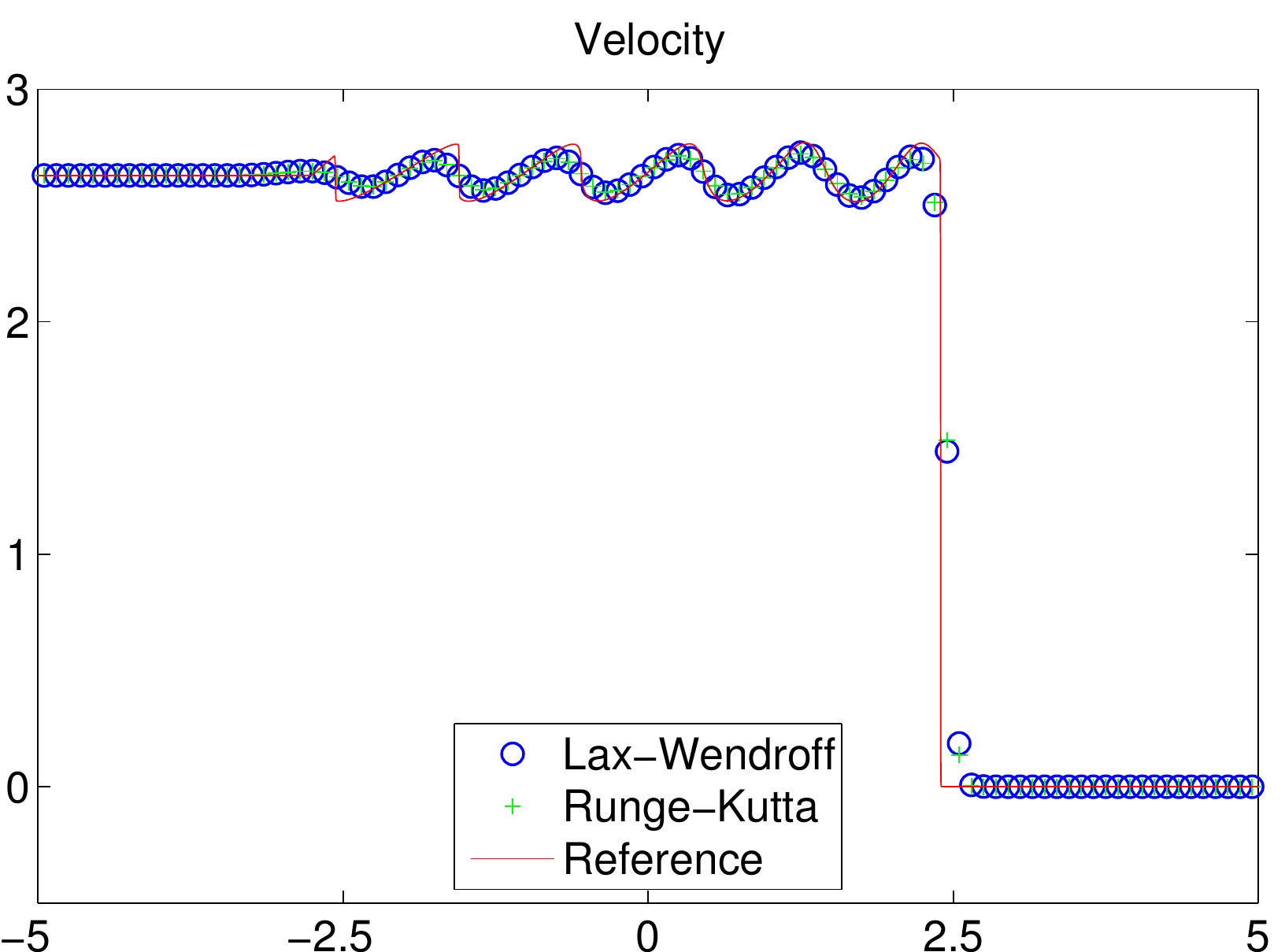}
\includegraphics[width=0.3\textwidth]{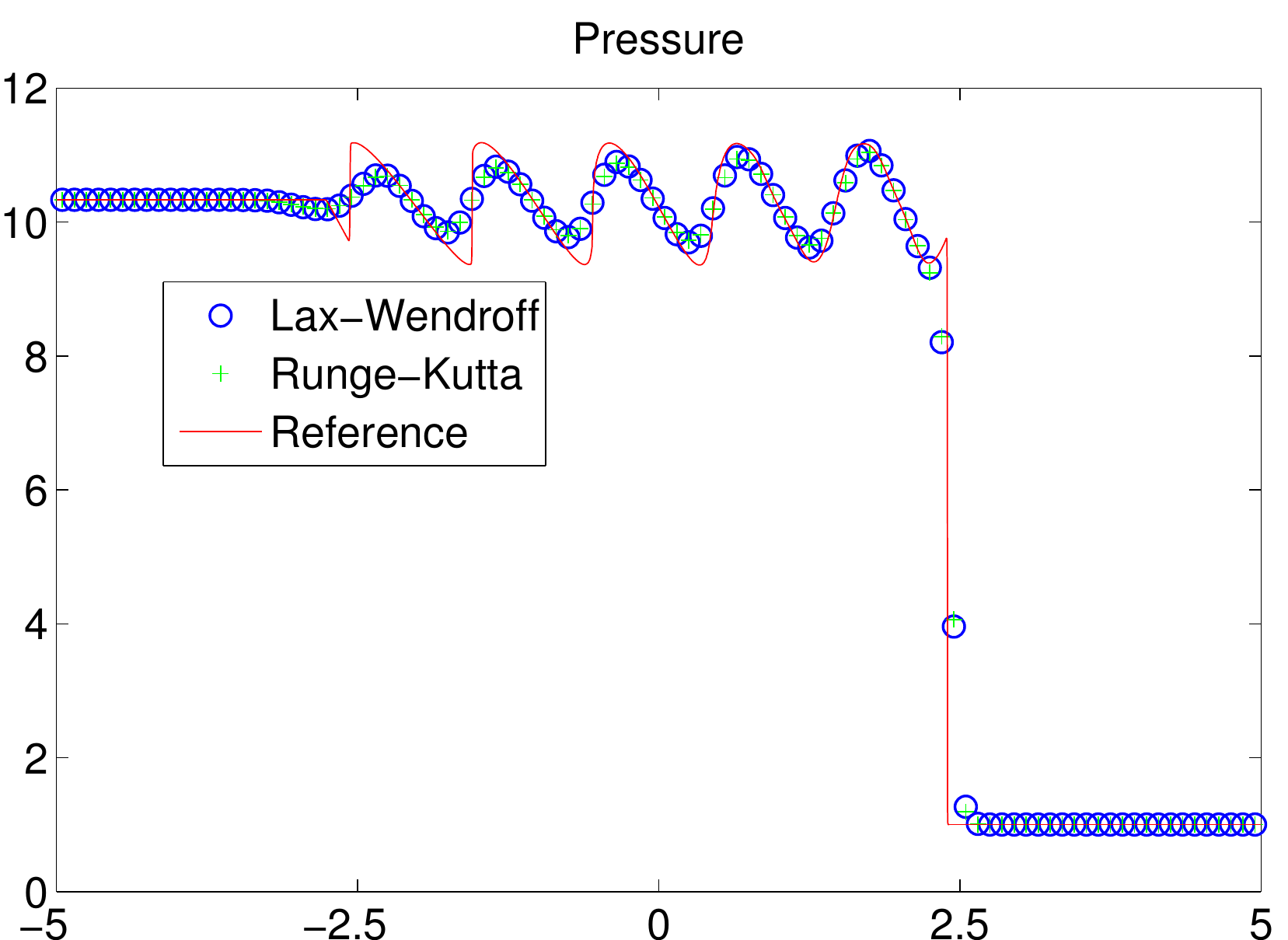}
\caption{1D Euler equations: shock entropy problem.
We plot observable quantities from left to right:
density $\rho$, velocity $u$, and pressure $p$.
These results were run with a CFL number of $\nu = 0.4$, and 
$m_x = 100$ grid points.  
Given that we do not have access to an exact solution, 
we plot a reference solution (solid line) constructed from a classical finite difference WENO 
simulation that uses the SSP-RK3 method described in Gottlieb and Shu \cite{GoShu98},
with $m_x = 6000$ points and a small CFL number of $\nu = 0.1$.  Observe that 
the Runge-Kutta method is indicating convergence to the correct entropy solution despite
the fact that no characteristic projections were used to compute each 
intermediate stage value.
\label{fig:euler-shock-entropy}
}
\end{figure}

\subsection{The 2D Euler equations}
In 2D the Euler equations become
\begin{equation}
\left( 
\begin{array}{c}
    \rho \\ \rho u \\ \rho v \\ \E
    \end{array}
    \right)_{, t}
    + 
    \left( 
    \begin{array}{c}
       \rho u \\ \rho u^2 + p \\ \rho u v \\ ( \E + p ) u
    \end{array}
    \right)_{, x}
    +
    \left( 
    \begin{array}{c}
       \rho v \\ \rho u v \\ \rho v^2 + p,  \\ ( \E + p ) v
    \end{array}
    \right)_{, y}
    = 0,
\end{equation}
where $u$ is the $x$-component of velocity, and $v$ is the $y$-component of
velocity.
Now, the total energy relies on both components of the velocity:
$\E = \frac{p}{\gamma-1} + \frac{1}{2} \rho \left(u^2 + v^2 \right)$.

\subsubsection{The 2D Euler equations: a smooth solution}
\label{subsubsec:2deuler-smooth}

We present convergence results for a smooth solution proposed elsewhere 
in the literature \cite{QiuShu03,QiuDumbserShu05}.  The initial conditions are
prescribed by
\begin{equation*}
    \left( \rho, u, v, p \right) = 
    \left(1 + 0.2 \sin(\pi(x+y)), 0.7, 0.3, 1.0 \right),
\end{equation*}
and we compute the solution on a periodic domain $\Omega = [0,2]\times[0,2]$.  
The exact solution has an evolving
density,
$\rho(t,x,y) = 1 + 0.2\sin\left(\pi(x+y - (u+v)t )\right)$, and constant
velocities $u = 0.7, v = 0.3$ and pressure $p = 1.0$.
A convergence study for this 2D problem is presented in 
Table \ref{table:euler-convergence}.  
In 2D, we define the relative $L^1$ error as
\begin{equation}
\label{eqn:error2d}
\text{Error } := 
   \frac{ { \dx \dy \sum_{i=1}^{m_x} \sum_{j=1}^{m_y} \left| q^n_{ij} - q(t^n, x_i, y_j ) \right| } }{ 
           \dx \dy \sum_{i=1}^{m_x} \sum_{j=1}^{m_y} |q(t^n, x_i, y_j)|  },
\end{equation}
and the CFL number is defined by
\begin{equation}
\label{eqn:cfl2d}
    \nu := \dt \max_{i,j} \left\{ \frac{ \alpha^*_{i-1/2,j} }{ \dx}, \frac{
    \alpha^*_{i,j-1/2} }{\dy} \right\}.
\end{equation}

\begin{table}
\centering
\normalsize
\caption{Euler equations: smooth solutions.
Shown here is a convergence study
for smooth solutions to 2D Euler equations as presented in 
\S\ref{subsubsec:2deuler-smooth}.  We use a uniform grid with 
$m_x = m_y$ grid points reported in the first column.  
Errors are defined as relative errors as in \eqref{eqn:error2d}.
We use a CFL number of $\nu = 0.4$ as defined in \eqref{eqn:cfl2d}.
The first two columns are the errors for the Taylor discretization of the PIF, and the last two
columns describe the errors for the RK4 discretization of the PIF.  
For comparison, in the middle two columns, we report data for
a classical finite difference WENO simulation run with the SSP-RK3 
method described in Gottlieb and Shu \cite{GoShu98}, and remark that the
errors for the third-order schemes are on par with each other.
The dominant error for the RK discretization
is a spatial error, and we run into machine round-off errors before being able to expose the formal fourth-order accuracy.
\label{table:euler-convergence} }
\footnotesize
\begin{tabular}{|r||c|c||c|c||c|c|}
\hline
\bf{Mesh} & \bf{PIF-Taylor} & \bf{Order} & \bf{SSP-RK3} & \bf{Order} & \bf{PIF-RK4} & \bf{Order} \\
\hline
\hline
$  50$ & $5.4101\times 10^{-06}$ & --- & $5.4514\times 10^{-06}$ & --- & $5.1974\times 10^{-06}$ & --- \\
\hline
$ 100$ & $1.9177\times 10^{-07}$ & $4.818$ & $1.9289\times 10^{-07}$ & $4.821$ & 
$1.6132 \times 10^{-07}$ & $5.010$ \\
\hline
$ 200$ & $8.8529\times 10^{-09}$ & $4.437$ & $8.8841\times 10^{-09}$ &
$4.440$ &
$4.9388 \times 10^{-09}$ & $5.030$ \\
\hline
$ 400$ & $6.3477\times 10^{-10}$ & $3.802$ & $6.3557\times 10^{-10}$ &
$3.805$ & $1.4352 \times 10^{-10}$ & $5.105$ \\
\hline
$ 800$ & $6.4601\times 10^{-11}$ & $3.297$ & $6.4636\times 10^{-11}$ &
$3.298$ &
$3.6152 \times 10^{-12}$ & $5.311$
\\
\hline
\end{tabular}

\end{table}

\subsubsection{The 2D Euler equations: double mach reflection}
\label{subsubsec:double-mach}

We now present results for the so-called double mach reflection originally proposed by
Woodward and Colella \cite{WoodwardColella84} that was intended to serve as a 
test problem to compare numerical methods.  
This problem has since become ubiquitous in
the literature \cite{JiangShu96,BaDiShu00,ReFlaShep03,ShiZhangYongShu03,TitarevToro05:JCP:systems}.
The initial
conditions describe a Mach-10 shock incident upon a single 
wedge (c.f. Fig.~$4$ in \cite{ReFlaShep03}).  The
computational domain is tilted, so that the 
wedge is positioned along the bottom of the grid.  The 
shock forms an oblique angle with the mesh, where it starts at the front of
the wedge located at
$(x,y) = (1/6,0)$, and continues up to the top of the computational domain
located at $y=1$.  
The initial conditions 
describe two constant values, one to the left and one to right of the shock as
\begin{equation}
\left(\rho, u, v, p \right) = 
\begin{cases}
   (1.4,\, 0,\, 0,\, 1.0)^T, \quad & \text{if } x < \frac{1}{6} + \frac{y}{\sqrt{3}}, \\
   (8.0,\, \frac{8.25\sqrt{3}}{2},\, -\frac{8.25}{2},\, 116.5)^T, \quad & \text{otherwise}.
\end{cases}
\end{equation}
Reflective boundary conditions are applied along the bottom edge when
$x > 1/6$, and the exact pre- and post-shock values are padded everywhere else.
In order to pad the correct boundary conditions along the top, we
require the exact location $x_s$ of the shock at time $t$ along the line $y=1$,
which is given by
$x_s(t) = 1/6+(20t+1)/\sqrt{3}$.
Results for this problem are presented in Fig.~\ref{fig:mach-reflection}.  
Given that
a consensus concerning what contour lines to plot has not been reached, we 
plot the 30 equally spaced contours from $\rho = 1.728$ to $\rho = 20.74$, as 
reported in one of the many simulations presented by the original authors \cite{WoodwardColella84}.

\begin{figure}[!htb]
\centering
\includegraphics[width=0.9\textwidth]{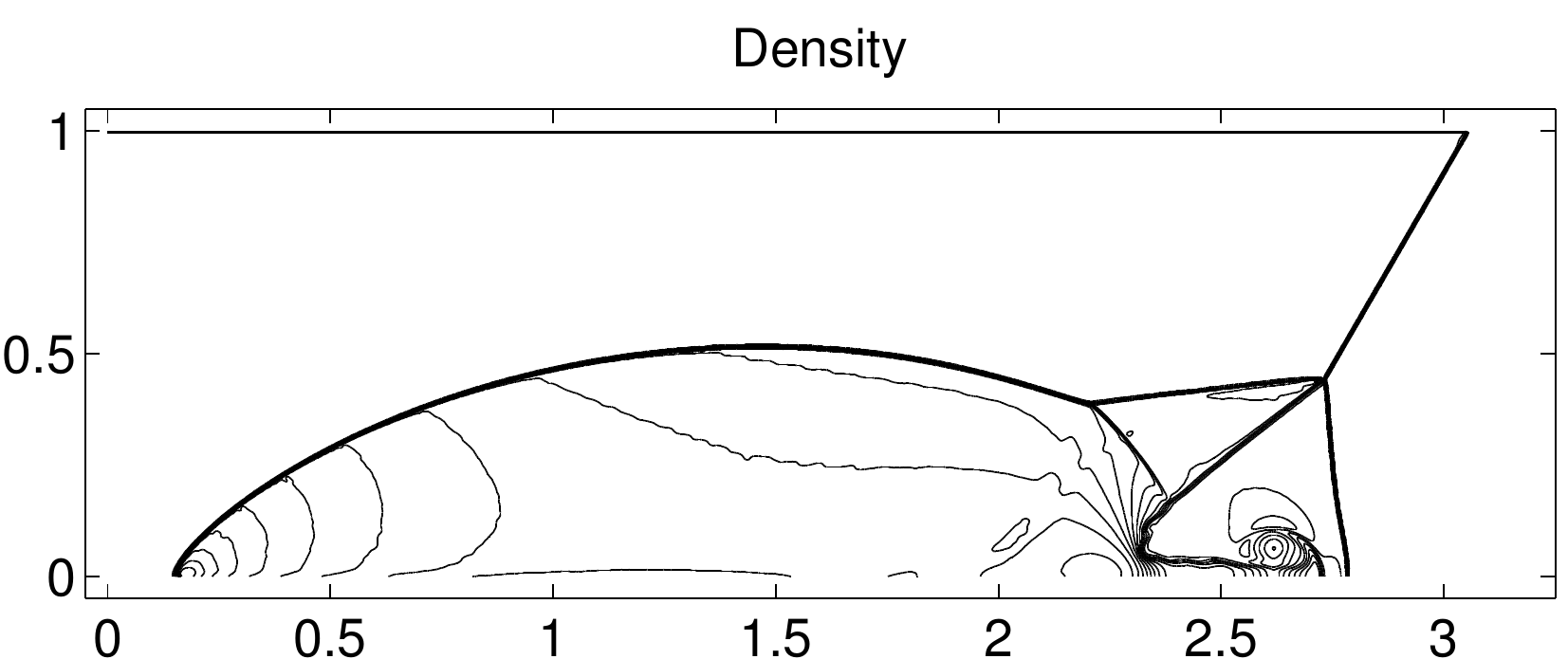}
\caption{Double mach reflection.  
Shown here are results for the Taylor method applied to the double mach
reflection problem described in \S\ref{subsubsec:double-mach}.  Results for the
RK discretization are nearly identical.  Minor deviations in the contour
lines are only visible under a high degree of magnification.
We run the simulation to a final time of $t=0.2$, and use a 
CFL number of $\nu = 0.4$.  A total of $(m_x,m_y) = (900,300)$ grid points are
used for this simulation.  
We plot 30 equally spaced contours from $\rho = 1.728$ to $\rho = 20.74$.
The results are in agreement with 
simulations produced by explicit SSP time integrators.
}
\label{fig:mach-reflection}
\end{figure}

\section{Conclusions}
\label{sec:conclusions}

We have formulated and presented results for the 
\emph{Picard integral formulation} of the finite difference WENO method.  The new formulation allows
us to step back from the classical formulations and include the spatial
discretization as part of the temporal discretizations. 
We have demonstrated how Taylor and Runge-Kutta methods can be
developed from this new vantage, and have introduced results for the proposed
formulation in one- and two-dimensions, that indicate the new methods compete
with current state of the art technology.  Future work will include
investigating positivity-preserving limiters for finite difference methods, 
as well as other temporal discretizations of the time-averaged fluxes 
including multiderivative and alternative flux modifications.


{\bf Acknowledgments} We would like to thank the anonymous reviewers for their
thoughtful comments and encouragement to include more extensive results.
In addition, we would like to thank Gwendolyn Miller Seal for
creating one of the figures used in this manuscript, and Nathan Collins for
aid in manipulating the symbolic expressions used to generate source code
for the simulations.  Finally, we would like to thank Scott Moe for
providing feedback on the revised manuscript.

\begin{appendix}

\section{WENO reconstruction}
\label{app:weno-reconstruct}

For completeness, we present the coefficients required to
reproduce the results in this paper.  
We restrict our attention to the fifth-order case, which
uses a five point stencil shifted to the left or 
right of each cell interface:
\begin{align*}
   u_{i+1/2}^+ &:= WENO5^+[ \u_{i-2}, \u_{i-1}, \u_{i},   \u_{i+1}, \u_{i+2} ], \\
   u_{i+1/2}^- &:= WENO5^-[ \u_{i-1}, \u_{i},   \u_{i+1}, \u_{i+2}, \u_{i+3} ].
\end{align*}
Here, the known values are the cell averages 
\begin{equation*}
    \u_j = \frac{1}{\dx} \int_{x_j-\dx/2}^{x_j+\dx/2} u(x)\, dx
\end{equation*}
for some scalar function $u$.
The purpose of having a ``$+$'' and ``$-$'' value is to define upwind stencils
that are numerically stable \cite{book:Le02}.
We define coefficients for the function $WENO5^+$, and
by symmetry, we define
\begin{align}
   WENO5^-[ \u_{i-1}, \u_{i},   \ldots \u_{i+3} ] :=
   WENO5^+[ \u_{i+3}, \u_{i+2}, \ldots \u_{i-1} ].
\end{align}
%
Three sub-stencils define quadratic polynomials that offer competing third-order
accurate values for $u(x_{i+1/2})$:
\begin{subequations}
\begin{align}
\label{eqn:weno-interp0}
   u^{(0)}_{i+1/2} &= \phantom{-} 
   \frac{1}{3} \u_{i-2} -  \frac{7}{6} \u_{i-1} + \frac{11}{6} \u_i, \\
\label{eqn:weno-interp1}
   u^{(1)}_{i+1/2} &= - 
   \frac{1}{6} \u_{i-1} +  \frac{5}{6} \u_{i\phantom{-1}} + \frac1 3 \u_{i+1}, \\
\label{eqn:weno-interp2}
   u^{(2)}_{i+1/2} &= \phantom{-} 
   \frac{1}{3} \u_{i\phantom{-1}} +  \frac{5}{6} \u_{i+1} - \frac{1}{6} \u_{i+2}.
\end{align}
\end{subequations}

%
A linear combination of \eqref{eqn:weno-interp0}-\eqref{eqn:weno-interp2} yields
a fifth-order accurate approximation
$u(x_{i+1/2}) \approx   \gamma_0 u^{(0)}_{i+1/2} +
                        \gamma_1 u^{(1)}_{i+1/2} +
                        \gamma_2 u^{(2)}_{i+1/2}$
with \emph{linear weights} 
$\gamma_j \in \left\{1/10,\, 3/5,\, 3/10\right\}$.
%
The WENO procedure replaces the linear weights $\gamma_j$ with nonlinear 
weights $\omega_j$ based on 
the \emph{smoothness indicators} $\beta_j$.
%
In fifth-order WENO, the indicators are
\begin{align}
\begin{aligned}
   \beta_0 &=  \frac{13}{12} \left(  \u_{i-2}- 2\u_{i-1} +  \u_i     \right)^2
             + \frac{1}{4}   \left(  \u_{i-2}- 4\u_{i-1} + 3\u_i     \right)^2, \\
   \beta_1 &=  \frac{13}{12} \left(  \u_{i-1}- 2\u_{i}   +  \u_{i+1} \right)^2
             + \frac{1}{4}   \left(  \u_{i-1}-  \u_{i+1}             \right)^2, \\
   \beta_2 &=  \frac{13}{12} \left(  \u_{i}  - 2\u_{i+1} +  \u_{i+2} \right)^2
             + \frac{1}{4}   \left( 3\u_{i}  - 4\u_{i+1} +  \u_{i+2} \right)^2.
\end{aligned}
\end{align}
%
The Jiang and Shu weights \cite{JiangShu96} are defined by
\begin{align}
   \omega_k = \frac{\tilde{\omega}_k}{ \sum_{l=0}^2 \tilde{\omega}_l }, 
       \quad
   \tilde{\omega}_k = \frac{\gamma_k}{\left( \beta_k + \eps \right)^p }.
\end{align}
We use the power parameter $p=2$ and regularization parameter
$\eps = 10^{-12}$ for all of our simulations.
With these definitions in place, the final reconstructed value is defined as
\begin{align}
 WENO5^+[ \u_{i-2}, \ldots \u_{i+2} ] := 
 \omega_0\, u^{(0)}_{i+1/2} +
 \omega_1\, u^{(1)}_{i+1/2} +
 \omega_2\, u^{(2)}_{i+1/2}.
\end{align}


\end{appendix}


\bibliographystyle{ieeetr}      

\bibliography{Edited-References}

\end{document}